\setlist[enumerate]{label=\arabic*)}
\let\originalleft\left
\let\originalright\right
\renewcommand{\left}{\mathopen{}\mathclose\bgroup\originalleft}
\renewcommand{\right}{\aftergroup\egroup\originalright}
\DeclareSymbolFont{bbold}{U}{bbold}{m}{n}
\DeclareSymbolFontAlphabet{\mathbbm}{bbold}
\newcommand{\M}{\mathfrak{M}}
\newcommand{\N}{\mathfrak{N}}
\newcommand{\B}{\mathbb{B}}
\newcommand{\0}{\mathbbm{0}}
\newcommand{\1}{\mathbbm{1}}
\newcommand{\Qp}[1]{{\left\llbracket #1 \right\rrbracket}}
\newcommand{\Seq}[1]{\Braket{\, #1 \,}}
\newcommand{\abs}[1]{{\left\lvert #1 \right\rvert}}
\newcommand{\pow}[1]{{\mathcal{P}\left( #1 \right)}}
\newcommand{\fin}[1]{{\left[ #1 \right]^{<\aleph_0}}}
\newcommand{\bp}[2]{{{#1}^{\left[#2\right]}}}
\newcommand{\bu}[3]{{{#1}^{\left[#2\right]}\! / #3}}
\newcommand{\eq}[2]{{\left[ #1 \right]_{#2}}}
\DeclareMathOperator{\dom}{dom}
\theoremstyle{plain}
\newtheorem{theorem}{Theorem}[section]
\newtheorem{lemma}[theorem]{Lemma}
\newtheorem{claim}{Claim}
\theoremstyle{definition}
\newtheorem{definition}[theorem]{Definition}
\newtheorem{question}[theorem]{Question}
\theoremstyle{remark}
\newtheorem{remark}[theorem]{Remark}
\begin{document}
\title{Keisler's order via Boolean ultrapowers}
\author{Francesco Parente}
\address{School of Mathematics\\University of Leeds\\Leeds LS2 9JT\\United Kingdom}
\thanks{This work is part of my PhD thesis at the University of East Anglia. I would like to thank my supervisors Mirna D\v zamonja and David Asper\'o for their invaluable guidance and support throughout my doctoral studies. I wish to express my gratitude also to Maryanthe Malliaris for her encouragement and advice, in particular during a research visit to the University of Chicago in May 2018.\\This research was partially supported by an Early Career Fellowship from the London Mathematical Society.}
\begin{abstract}
In this paper, we provide a new characterization of Keisler's order in terms of saturation of Boolean ultrapowers. To do so, we apply and expand the framework of `separation of variables' recently developed by Malliaris and Shelah. We also show that good ultrafilters on Boolean algebras are precisely the ones which capture the maximum class in Keisler's order, answering a question posed by Benda in 1974.
\end{abstract}
\maketitle

\section{Introduction}\label{section:uno}

This paper continues the research initiated in \cite{parente:oru}. We are interested in Keisler's order, a tool to classify complete theories according to saturation of ultrapowers. A central notion for the study of ultrapowers is the one of \emph{regular} ultrafilter.

\begin{definition}[Keisler \cite{keisler:regular}]\label{definition:regularpow} Let $\kappa$ be a cardinal. A filter $F$ over a set $I$ is \emph{$\kappa$-regular} if there exists a family $\Set{X_\alpha | \alpha<\kappa}\subseteq F$ such that for every infinite $I\subseteq\kappa$ we have $\bigcap_{\alpha\in I}X_\alpha=\emptyset$.
\end{definition}

Although regular ultrafilters were originally considered by Frayne, Morel and Scott \cite{fms:reduced} to determine the possible cardinalities of ultrapowers, they later played a prominent role in the classification of theories, starting from the following result by Keisler.

\begin{theorem}[{Keisler \cite[Corollary 2.1a]{keisler:notsat}}]\label{theorem:kindep} Let $\kappa$ be an infinite cardinal; suppose $U$ is a $\kappa$-regular ultrafilter over a set $I$. If two $L$-structures $\M$ and $\N$ are elementarily equivalent and $\abs{L}\le\kappa$, then
\[
\M^I\!/U\text{ is }\kappa^+\text{-saturated}\iff\N^I\!/U\text{ is }\kappa^+\text{-saturated}.
\]
\end{theorem}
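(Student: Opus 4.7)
The plan is to show that $\kappa^+$-saturation of a $\kappa$-regular ultrapower $\mathfrak{K}^I/U$, with $|L| \le \kappa$, is controlled by a combinatorial condition depending only on $U$ and $T := \operatorname{Th}(\mathfrak{K})$; since $T$ is the same for $\M$ and $\N$, the equivalence in the statement then follows by instantiating $\mathfrak{K} = \M$ and $\mathfrak{K} = \N$.

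I would follow Keisler's framework of \emph{distributions} and \emph{multiplicative refinements}. Fix an enumeration of the $L$-formulas, possible because $|L| \le \kappa$, and consider a consistent $\kappa$-type $p(x) = \{\phi_\alpha(x, [f_\alpha]_U) : \alpha < \kappa\}$ over $\mathfrak{K}^I/U$. By {\L}o\'s's theorem the sets
\[
A_s = \left\{ i \in I : \mathfrak{K} \models \exists x \bigwedge_{\alpha \in s} \phi_\alpha(x, f_\alpha(i)) \right\}
\]
belong to $U$ for every finite $s \subseteq \kappa$. Letting $\{X_\alpha : \alpha < \kappa\}$ be a $\kappa$-regularity witness, so that $F_i := \{\alpha < \kappa : i \in X_\alpha\}$ is finite for every $i \in I$, one refines the $A_s$ to a monotone distribution $d : \fin{\kappa} \to U$ with $d(s) \subseteq A_s \cap \bigcap_{\alpha \in s} X_\alpha$. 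The standard lemma is that $p$ is realized in $\mathfrak{K}^I/U$ iff $d$ admits a \emph{multiplicative refinement}, i.e.\ a function $d' : \fin{\kappa} \to U$ with $d'(s) \subseteq d(s)$ and $d'(s \cup t) = d'(s) \cap d'(t)$; from $d'$ a realizer is built pointwise by choosing, at each $i \in I$, a witness in $\mathfrak{K}$ to the existential over the finitely many indices active at $i$.

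Thus $\mathfrak{K}^I/U$ is $\kappa^+$-saturated iff every distribution arising from a consistent $\kappa$-type admits a multiplicative refinement, and the latter is a purely Boolean condition on $(d,U)$. It therefore suffices to check that the family of such distributions coincides whether computed in $\M^I/U$ or in $\N^I/U$. This I would prove by a pointwise transfer: given parameters $g_\alpha : I \to N$ inducing a distribution $d$ in $\N^I/U$, I construct parameters $f_\alpha : I \to M$ inducing the same $d$ in $\M^I/U$. At each $i \in I$, consider the finite satisfaction pattern
\[
\tau_i : \pow{F_i} \to 2, \qquad \tau_i(s) = 1 \iff \N \models \exists x \bigwedge_{\alpha \in s} \phi_\alpha(x, g_\alpha(i)),
\]
and encode it as a single $L$-formula $\psi_{\tau_i}$ in free variables $(y_\alpha)_{\alpha \in F_i}$. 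Since $\Seq{g_\alpha(i)}_{\alpha \in F_i}$ witnesses $\exists \bar y \, \psi_{\tau_i}$ in $\N$, elementary equivalence yields a matching witness $\Seq{f_\alpha(i)}_{\alpha \in F_i}$ in $\M$; for $\alpha \notin F_i$ the value $f_\alpha(i) \in M$ is irrelevant and may be chosen arbitrarily.

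The main obstacle will be the verification that this transfer does match the distributions: one must ensure the pattern $\tau_i$ controls the sets $A_s^\M$ and $A_s^\N$ finely enough to make them agree modulo $U$ on the relevant portion $\bigcap_{\alpha \in s} X_\alpha$, and that the reverse transfer from $\M$ to $\N$ works symmetrically. The finiteness of each $F_i$ is essential, because it keeps $\psi_{\tau_i}$ within a single $L$-formula and hence within the reach of $\M \equiv \N$. Once both transfers are in place, $\M^I/U$ and $\N^I/U$ face identical multiplicative-refinement problems, which gives the biconditional.
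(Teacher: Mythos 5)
The paper does not prove this theorem: it is quoted directly from Keisler \cite[Corollary 2.1a]{keisler:notsat}, so there is no internal proof to compare against. Your plan is a faithful reconstruction of the standard (essentially Keisler's original) argument via distributions and multiplicative refinements, and it is sound. It is also worth noting that it is exactly the template the paper itself adapts in Section \ref{section:tre}: your sets $A_s \cap \bigcap_{\alpha\in s} X_\alpha$ and the finite satisfaction patterns $\tau_i$ are the power-set-algebra instances of the function $f(S)=\Qp{\exists x\bigwedge_{\alpha\in S}\varphi_\alpha(x,\bm{\tau}_\alpha)}\wedge\bigwedge_{\alpha\in S}x_\alpha$ and of the notion of $\langle\kappa,\B,T,\varphi\rangle$-possibility in Definition \ref{definition:possibility} and Theorem \ref{theorem:moralsat}. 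Two small points deserve care when you write this up. First, in the ``standard lemma'' the direction \emph{realized $\Rightarrow$ multiplicative refinement} requires the canonical (maximal) distribution $d(s)=A_s\cap\bigcap_{\alpha\in s}X_\alpha$: for a proper refinement $d(s)\subsetneq A_s\cap\bigcap_{\alpha\in s}X_\alpha$ the multiplicative function built from a realizing element need not lie below $d$, so you should fix equality there rather than mere inclusion. Second, the transfer indeed works because for $i\in\bigcap_{\alpha\in s}X_\alpha$ one has $s\subseteq F_i$, so the single formula $\psi_{\tau_i}$ controls exactly the instances of $A_s$ that survive intersection with $\bigcap_{\alpha\in s}X_\alpha$; this gives $A^{\M}_s\cap\bigcap_{\alpha\in s}X_\alpha=A^{\N}_s\cap\bigcap_{\alpha\in s}X_\alpha$ on the nose, which is all that is needed, and the argument is symmetric in $\M$ and $\N$. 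With those verifications filled in, the obstacle you flag at the end is not a real one.
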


In other words, Theorem \ref{theorem:kindep} implies that the saturation of the regular ultrapower of a model of a complete theory does not depend on the choice of the particular model, but only on the theory itself. This naturally suggests a pre-ordering on the class of complete theories, which is now known as Keisler's order.

\begin{definition}[Keisler \cite{keisler:notsat}]\label{definition:ko} Let $T_0$ and $T_1$ be complete countable theories and $\kappa$ a cardinal. We define \emph{$T_0\trianglelefteq_\kappa T_1$} if for every $\kappa$-regular ultrafilter $U$ over $\kappa$ and models $\M_0\models T_0$, $\M_1\models T_1$, if ${\M_1}^\kappa\!/U$ is $\kappa^+$-saturated then ${\M_0}^\kappa\!/U$ is $\kappa^+$-saturated.

\emph{Keisler's order} is then defined as follows: $T_0\trianglelefteq T_1$ if and only if $T_0\trianglelefteq_\kappa T_1$ holds for every cardinal $\kappa$.
\end{definition}

The main contributions of this paper are the following. In Section \ref{section:due}, we answer a question posed by Benda \cite{benda:ultrapowers} by showing that $\aleph_1$-incomplete $\lambda$-good ultrafilters, as defined in Mansfield \cite{MANSFIELD}, are precisely the ones which $\lambda$-saturate every Boolean ultrapower.

Section \ref{section:tre} provides a new characterization of Keisler's order in terms of saturation of Boolean ultrapowers. Namely, using a suitable notion of regularity for ultrafilters on complete Boolean algebras, we prove that the use of Boolean ultrapowers instead of ultrapowers gives exactly the same classification of theories as Keisler's order. Theorem \ref{theorem:kobu} was first presented at the Logic Colloquium 2017 in Stockholm, and announced in a previous paper \cite{parente:oru}. We present the proof here for the first time. Ulrich \cite{ulrich:bv} has obtained a similar formulation of Keisler's order using Boolean-valued models, but our work is completely independent.

\section{Good ultrafilters}\label{section:due}

This section is focused on the interaction between combinatorial properties of ultrafilters and model-theoretic properties of Boolean ultrapowers, in particular saturation. We start by introducing some useful terminology in the context of Boolean algebras.

\begin{definition} Let $\B$ be a Boolean algebra. An \emph{antichain} in $\B$ is a subset $A\subseteq\B\setminus\{\0\}$ such that for all $a,b\in A$, if $a\wedge b>\0$ then $a=b$.

Let $\lambda$ be a cardinal; a Boolean algebra $\B$ is \emph{$\lambda$-c.c.} if every antichain in $\B$ has cardinality less than $\lambda$.
\end{definition}

Let $A$ and $W$ be maximal antichains in a complete Boolean algebra $\B$. We say that $W$ is a \emph{refinement} of $A$ if for every $w\in W$ there exists $a\in A$ such that $w\le a$. Note that this element $a\in A$ is unique.

\begin{definition}[Hamkins and Seabold \cite{HAMSEA}]\label{definition:hs} Let $M$ be a set, $A$ a maximal antichain, and $\tau\colon A\to M$ a function. If $W$ is a refinement of $A$, the \emph{reduction} of $\tau$ to $W$ is the function
\[
\begin{split}
(\tau\mathbin{\downarrow} W)\colon W &\longrightarrow M \\
w &\longmapsto \tau(a)
\end{split}\ ,
\]
where $a$ is the unique element of $A$ such that $w\le a$.
\end{definition}

\begin{remark} Finitely many maximal antichains $A_0,\dots,A_{n-1}$ always admit a common refinement, namely the maximal antichain
\[
\bigwedge_{i<n}A_i=\Set{a_0\wedge\dots\wedge a_{n-1} | a_i\in A_i\text{ for }i<n}\setminus\{\0\}.
\]
\end{remark}

Furthermore, if $x\in\B$ and $A$ is a maximal antichain in $\B$, we say that $x$ is \emph{based} on $A$ if for every $a\in A$ either $a\le x$ or $a\wedge x=\0$.

Next, we introduce two properties of functions which will be crucial both in Definition \ref{definition:good} and in Definition \ref{definition:moral}. As usual, $\fin{\kappa}$ denotes the set of finite subsets of $\kappa$; later on ${[\kappa]}^n$ will denote the set of subsets of $\kappa$ of cardinality $n$.

\begin{definition} Let $\kappa$ be a cardinal, $\B$ a Boolean algebra, and $f\colon\fin{\kappa}\to\B$.
\begin{itemize}
\item $f$ is \emph{monotonic} if for all $S,T\in\fin{\kappa}$, $S\subseteq T$ implies $f(T)\le f(S)$;
\item $f$ is \emph{multiplicative} if for all $S,T\in\fin{\kappa}$, $f(S\cup T)=f(S)\wedge f(T)$.
\end{itemize}
\end{definition}

Thus, in this context, a monotonic function is in fact monotonically decreasing. We also note that every multiplicative function is monotonic.

The next lemma highlights a property of multiplicative functions, which will be useful to establish Theorem \ref{theorem:moralsat}. The starting point is the observation that, given a multiplicative function $g\colon\fin{\kappa}\to\B$ and a maximal antichain $A$, if $g(\{\alpha\})$ is based on $A$ for every $\alpha<\kappa$, then also $g(S)$ is based on $A$ for every $S\in\fin{\kappa}$.

\begin{lemma}\label{lemma:bound} Let $\kappa$ be a cardinal and $\B$ a complete Boolean algebra. For a multiplicative function $g\colon\fin{\kappa}\to\B$, the following two conditions are equivalent:
\begin{enumerate}
\item\label{lemma:bounduno} $\displaystyle{\bigwedge\Set{\bigvee \Set{g(S) | S\in{[\kappa]}^n}| n<\omega}=\0}$.\\
\item\label{lemma:bounddue} There is a maximal antichain $A\subset\B$ such that:
\begin{itemize}
\item for every $\alpha<\kappa$, $g(\{\alpha\})$ is based on $A$;
\item for every $a\in A$, the set $\Set{\alpha<\kappa | a\le g(\{\alpha\})}$ is finite.
\end{itemize}
\end{enumerate}
\end{lemma}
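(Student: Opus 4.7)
The plan is to prove the two implications separately, with the real work concentrated in (1) $\Rightarrow$ (2), for which I will construct the required antichain explicitly. Throughout the argument, write $c_n := \bigvee\Set{g(S) | S \in {[\kappa]}^n}$, so that condition (1) reads $\bigwedge_n c_n = \0$.

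For (2) $\Rightarrow$ (1), fix $a \in A$ and let $F_a := \Set{\alpha<\kappa | a\le g(\{\alpha\})}$, which is finite by hypothesis. Take $n > \abs{F_a}$; for any $S \in {[\kappa]}^n$ there exists $\alpha \in S \setminus F_a$, and since $g(\{\alpha\})$ is based on $A$ with $a \not\le g(\{\alpha\})$, we get $a \wedge g(\{\alpha\}) = \0$. Monotonicity yields $a \wedge g(S) = \0$, whence $a \wedge c_n = \0$ and hence $a \wedge \bigwedge_m c_m = \0$. Since this holds for every $a$ in the maximal antichain $A$, we conclude $\bigwedge_m c_m = \0$.

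For (1) $\Rightarrow$ (2), my candidate is $A := \Set{h(S) | S \in \fin{\kappa},\ h(S) \ne \0}$, where
\[
h(S) := g(S) \wedge \bigwedge_{\alpha \in \kappa \setminus S} \neg g(\{\alpha\}),
\]
so that, morally, $h(S)$ isolates the ``part'' of $\B$ on which exactly the indices in $S$ satisfy $g(\{\alpha\})$. Pairwise disjointness of $h(S)$ and $h(T)$ is immediate: any $\alpha$ in the symmetric difference witnesses it. Once $A$ is known to be maximal, the two conditions in (2) read off directly from the definition of $h(S)$. The core task is thus $\bigvee A = \1$, which I intend to obtain by showing that every $b \in \B \setminus \{\0\}$ has a nonzero refinement below some $h(S)$. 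Assuming the convention $g(\emptyset) = \1$, we have $c_0 = \1$, so (1) lets me choose $n \ge 1$ least with $b \not\le c_n$. Then $b \le c_{n-1}$, so $b_2 := b \wedge \neg c_n \wedge g(S_0) \ne \0$ for some $S_0 \in {[\kappa]}^{n-1}$ (with $S_0 = \emptyset$ in the case $n = 1$). The crux is verifying $b_2 \le h(S_0)$: if some $\alpha \notin S_0$ had $b_2 \wedge g(\{\alpha\}) \ne \0$, then by multiplicativity this intersection would sit inside $g(S_0 \cup \{\alpha\}) \le c_n$, contradicting $b_2 \le \neg c_n$. I anticipate this interplay between multiplicativity and the complement of $c_n$ to be the only nontrivial step; everything else is bookkeeping.
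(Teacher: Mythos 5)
Your proof is correct and follows essentially the same route as the paper's: the decisive step in both is that an element of the form $b\wedge g(S)\wedge\neg c_{i+1}$ (for suitable $i$ and $S\in{[\kappa]}^i$) decides every $g(\{\alpha\})$ by multiplicativity. The only cosmetic difference is that you exhibit an explicit antichain $\Set{h(S)|h(S)\ne\0}$ and verify its maximality directly, whereas the paper takes an arbitrary maximal antichain inside the dense set of such deciding elements; your direct argument for the converse direction is the paper's contradiction argument phrased contrapositively.
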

\begin{proof} $\ref{lemma:bounduno}\Longrightarrow\ref{lemma:bounddue}$ The idea for the proof of this implication is already implicit in Mansfield \cite[Theorem 4.1]{MANSFIELD}. Let $g\colon\fin{\kappa}\to\B$ be a multiplicative function satisfying \ref{lemma:bounduno}. Without loss of generality, we may assume that $g(\emptyset)=\1$. Let $D$ be the set of all $d\in\B\setminus\{\0\}$ such that: for every $\alpha<\kappa$, either $d\le g(\{\alpha\})$ or $d\wedge g(\{\alpha\})=\0$, and the set $\Set{\alpha<\kappa | d\le g(\{\alpha\})}$ is finite.
We shall show that $D$ is dense, so that every maximal antichain $A\subseteq D$ will have the desired property.

Let $b\in\B\setminus\{\0\}$; we need to find some $d\in D$ such that $d\le b$. For every $n<\omega$, let us define
\[
c_n=\bigvee \Set{g(S) | S\in{[\kappa]}^n}
\]
and note that, by hypothesis, $\bigwedge_{n<\omega}c_n=\0$. Furthermore, it is easy to verify that $c_{n+1}\le c_n$ for all $n<\omega$. The sequence $\Seq{c_n|n<\omega}$ may be eventually $\0$, however it is not identically $\0$, as $c_0=g(\emptyset)=\1$. It follows that there exists some $i<\omega$ such that $\0<b\wedge c_i\wedge\neg c_{i+1}$. Therefore, by definition of $c_i$, there exists $S\in{[\kappa]}^i$ such that
\[
d=b\wedge g(S)\wedge\neg c_{i+1}>\0.
\]
Clearly $d\le b$, so we shall conclude the proof by showing that $d\in D$. For every $\alpha<\kappa$, if $\alpha\in S$ then
\[
d\le g(S)\le g(\{\alpha\});
\]
otherwise, if $\alpha\notin S$, then by the multiplicativity of $g$
\[
d\wedge g(\{\alpha\})=b\wedge g(S)\wedge g(\{\alpha\})\wedge\neg c_{i+1}=b\wedge g(S\cup\{\alpha\})\wedge\neg c_{i+1}\le b\wedge c_{i+1}\wedge\neg c_{i+1}=\0.
\]
Therefore $d\in D$, as desired.

$\ref{lemma:bounddue}\Longrightarrow\ref{lemma:bounduno}$ Suppose $g\colon\fin{\kappa}\to\B$ satisfies \ref{lemma:bounddue}; let $A$ be the maximal antichain in $\B$ given by the hypothesis. We want to show that
\[
\bigwedge\Set{\bigvee \Set{g(S) | S\in{[\kappa]}^n}| n<\omega}=\0.
\]
Suppose not; then there exists some $a\in A$ such that
\[
a\wedge\bigwedge\Set{\bigvee \Set{g(S) | S\in{[\kappa]}^n}| n<\omega}>\0,
\]
hence for every $n<\omega$ there exists some $S\in{[\kappa]}^n$ such that $\0<a\wedge g(S)$. Using the fact that $g$ is monotonic, we note that $g(S)\le\bigwedge_{\alpha\in S}g(\{\alpha\})$ and consequently
\[
\0<a\wedge\bigwedge_{\alpha\in S}g(\{\alpha\}).
\]
Since each $g(\{\alpha\})$ is based on $A$, we conclude that for all $n<\omega$ there exists $S\in{[\kappa]}^n$ such that
\[
a\le\bigwedge_{\alpha\in S}g(\{\alpha\}),
\]
but this contradicts our condition \ref{lemma:bounddue}.
\end{proof}

We are ready to introduce the notion of \emph{goodness} for filters over sets, which is due to Keisler.

\begin{definition}[Keisler \cite{keisler:good}]\label{definition:good} Let $\lambda$ be a cardinal. A filter $F$ over a set $I$ is \emph{$\lambda$-good} if for every $\kappa<\lambda$ and every monotonic function $f\colon\fin{\kappa}\to F$, there exists a multiplicative function $g\colon\fin{\kappa}\to F$ with the property that $g(S)\subseteq f(S)$ for all $S\in\fin{\kappa}$.
\end{definition}

In our context, we shall be concerned with $\lambda$-good ultrafilters which have the additional property of being \emph{$\aleph_1$-incomplete}, i.e.\ not $\aleph_1$-complete. This is due to the following result, which shows that such ultrafilters are precisely the ones which yield $\lambda$-saturated ultrapowers.

\begin{theorem}[Keisler \cite{keisler:ultsat}]\label{theorem:keislerultsat} Let $\lambda$ be an uncountable cardinal. For an ultrafilter $U$ over a set $I$, the following conditions are equivalent:
\begin{itemize}
\item $U$ is $\aleph_1$-incomplete and $\lambda$-good; 
\item for every $L$-structure $\M$ with $\abs{L}<\lambda$, the ultrapower $\M^I\!/U$ is $\lambda$-saturated.
\end{itemize}
\end{theorem}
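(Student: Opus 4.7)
I would handle the two directions separately, using classical ultraproduct techniques.

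\emph{Forward direction.} Suppose $U$ is $\aleph_1$-incomplete and $\lambda$-good, let $\M$ be an $L$-structure with $\abs{L}<\lambda$, and consider a type $p(x) = \Set{\varphi_\alpha(x,[h_\alpha]_U) | \alpha<\kappa}$ with $\kappa < \lambda$ that is finitely satisfiable in $\M^I\!/U$. By \L o\'s's theorem, the function
\[
f(S) = \Set{i \in I | \M \models \exists x \bigwedge_{\alpha \in S}\varphi_\alpha(x,h_\alpha(i))}
\]
sends $\fin{\kappa}$ monotonically into $U$. I would use $\aleph_1$-incompleteness to fix a descending sequence $I = I_0 \supseteq I_1 \supseteq \cdots$ of elements of $U$ with empty intersection, and replace $f(S)$ by $f(S) \cap I_{\abs{S}}$; the result is still a monotonic map into $U$, but now for every $i \in I$ only finitely many $S$ satisfy $i \in f(S)$. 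Applying $\lambda$-goodness yields a multiplicative $g \le f$ into $U$. Then for each $i \in I$ the set $\Set{S | i \in g(S)}$ is finite and, by multiplicativity, closed under pairwise union, hence has a maximum element $S_i$. I pick $h(i) \in M$ witnessing the existential quantifier at $S_i$, and verify that $[h]_U$ realises $p$: for each $\alpha<\kappa$, on the $U$-large set $g(\{\alpha\})$ we have $\alpha \in S_i$ and hence $\M \models \varphi_\alpha(h(i),h_\alpha(i))$.

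\emph{Converse.} Assume every ultrapower $\M^I\!/U$ with $\abs{L}<\lambda$ is $\lambda$-saturated. For $\aleph_1$-incompleteness, note that if $U$ were $\aleph_1$-complete then every function from $I$ into a countable set would be constant on a set in $U$, giving $\M^I\!/U \cong \M$ for $\M = (\omega,<)$; since this $\M$ is not $\aleph_1$-saturated and $\lambda \ge \aleph_1$, this would contradict the hypothesis. For $\lambda$-goodness, fix $\kappa<\lambda$ and a monotonic $f\colon\fin{\kappa}\to U$. The plan is to reformulate the existence of a multiplicative refinement as a type-realisation problem. I would work in a language of size at most $\kappa+\aleph_0<\lambda$, with a structure $\M$ designed so that $\M^I\!/U$ interprets a Boolean algebra extending $\pow{I}/U$ and contains a distinguished element coding $[\chi_{f(S)}]_U$ for each $S \in \fin{\kappa}$. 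The type $q\bigl(\Seq{y_S | S \in \fin{\kappa}}\bigr)$ would then assert that the family $\Seq{y_S | S \in \fin{\kappa}}$ is multiplicative and that each $y_S$ is bounded above by the coded parameter for $f(S)$. Finite satisfiability of $q$ reduces to the monotonicity of $f$, by multiplicatively closing any finite fragment of $f$ directly in $\pow{I}$; $\lambda$-saturation then realises $q$, and reading off the realisation component-wise produces the required multiplicative refinement $g\colon\fin{\kappa}\to U$.

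The main obstacle sits in the converse: one must set up the coding so that a realisation in the ultrapower really descends to a multiplicative refinement of $f$ inside $U$ itself, rather than merely in some larger Boolean algebra, and one must keep the language of cardinality strictly below $\lambda$. The forward direction is, by contrast, a direct combinatorial application of goodness together with the standard trick using the descending sequence witnessing $\aleph_1$-incompleteness.
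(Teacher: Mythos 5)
The paper does not actually prove this theorem; it is quoted from Keisler, and only its Boolean-ultrapower analogues (Theorems \ref{theorem:goodmansfield} and \ref{theorem:goodbenda}) are discussed, so your proposal can only be measured against the classical argument. Your forward direction is essentially that argument, and it is the power-set-algebra shadow of the paper's Lemma \ref{lemma:bound}. One slip: after replacing $f(S)$ by $f(S)\cap I_{\abs{S}}$ it is \emph{not} true that ``only finitely many $S$ satisfy $i\in f(S)$'' --- the descending chain only bounds $\abs{S}$ by some $n_i$ for those $S$ with $i\in f(S)$, and there may still be infinitely many such $S$ (all singletons, say). Finiteness of $\{S : i\in g(S)\}$, hence the existence of the maximum $S_i$, is recovered only after passing to the multiplicative $g$: closure under pairwise unions together with the bound $\abs{S}\le n_i$ forces every such $S$ to be contained in a single set of size $\le n_i$. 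With that repair the forward direction is correct.

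The converse is where the genuine gap sits, and you have located it yourself without closing it. The object you propose to realise, $q\bigl(\Seq{y_S | S\in\fin{\kappa}}\bigr)$, has $\kappa$ free variables, so it is not a type in the sense required by $\lambda$-saturation (which realises types in one, or finitely many, variables); and even granting some coding, a realisation would produce elements of the internal Boolean algebra $\pow{I}^I\!/U$ rather than actual subsets of $I$ lying in $U$ --- exactly the ``descent'' problem you name as the main obstacle. The classical resolution avoids the internal Boolean algebra altogether: take a structure whose universe is $\fin{\kappa}\cup I$ (disjointified), with the relation $\subseteq$ on $\fin{\kappa}$, constants $c_\alpha$ for the singletons, and a binary relation $E$ with $E(i,S)\iff i\in f(S)$; the language has size $\kappa<\lambda$. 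Realise the \emph{one-variable} type consisting of $c_\alpha\subseteq x$ for each $\alpha<\kappa$ together with the single formula $\forall z\,(z\subseteq x\rightarrow E(d,z))$, where $d$ is the class of the identity map on $I$. Finite satisfiability follows from monotonicity of $f$ via \L o\'s's theorem, since the fragment indexed by $\{\alpha_1,\dots,\alpha_n\}$ is witnessed at every $i\in f(\{\alpha_1,\dots,\alpha_n\})$ by the set $\{\alpha_1,\dots,\alpha_n\}$ itself. If $T\colon I\to\fin{\kappa}$ represents a realisation, then $g(S)=\{i\in I : S\subseteq T(i)\}$ (intersected with the $U$-large set on which the $\forall z$ clause holds pointwise) is multiplicative by definition, lies in $U$ because each $c_\alpha\subseteq x$ is realised, and satisfies $g(S)\subseteq f(S)$ because every subset of $T(i)$ is $E$-related to $i$. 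That single-variable encoding, which hands you genuine elements of $U$ rather than internal ones, is the missing idea.
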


In the remainder of this section, we shall consider whether a similar characterization can be found for ultrafilters on complete Boolean algebras. Hence, we naturally look at Boolean ultrapowers, a generalization of the usual ultrapower construction. A detailed presentation of Boolean ultrapowers can be found elsewhere, such as in the standard reference of Mansfield \cite{MANSFIELD}. However, to keep this paper self-contained, we now recall the main points.

Let $\M$ be an $L$-structure and $\B$ a complete Boolean algebra. Define first the set of \emph{names}
\[
\bp{M}{\B}=\Set{\tau\colon A\to M | A\subset\B\text{ is a maximal antichain}}.
\]
\begin{remark} In the definition of names, we could equivalently reverse the arrows and consider functions from $M$ to $\B$, as in Mansfield \cite{MANSFIELD}. However, we find the above presentation more convenient for our purposes.
\end{remark}

We then introduce a Boolean-valued semantic as follows: let $\varphi(x_1,\dots,x_n)$ be an $L$-formula and $\tau_1,\dots,\tau_n\in \bp{M}{\B}$. If $W$ is any common refinement of $\dom(\tau_1),\dots,\dom(\tau_n)$, then
\[
\Qp{\varphi(\tau_1,\dots,\tau_n)}^\bp{\M}{\B}=\bigvee\Set{w\in W | \M\models\varphi\bigl((\tau_1\mathbin{\downarrow} W)(w),\dots,(\tau_n\mathbin{\downarrow} W)(w)\bigr)}.
\]
From now on, when there is no danger of confusion, the superscript $\bp{\M}{\B}$ will be omitted.

Given an ultrafilter $U$ on $B$, let $\equiv_U$ be the equivalence relation on $\bp{M}{\B}$ defined by
\[
\tau\equiv_U\sigma\overset{\mathrm{def}}{\iff} \Qp{\tau=\sigma}\in U.
\]
Quotienting the set of names by the above equivalence relation gives rise to the $L$-structure $\bu{\M}{\B}{U}$, the \emph{Boolean ultrapower} of $\M$ by $U$, which satisfies the following analogue of \L o\'s theorem.

\begin{theorem}[{Mansfield \cite[Theorem 1.5]{MANSFIELD}}]\label{theorem:los} Let $\M$ be an $L$-structure, $\B$ a complete Boolean algebra, and $U$ an ultrafilter on $\B$. For every $L$-formula $\varphi(x_1,\dots,x_n)$ and names $\tau_1,\dots,\tau_n\in\bp{M}{\B}$ we have
\[
\bu{\M}{\B}{U}\models\varphi\bigl(\eq{\tau_1}{U},\dots,\eq{\tau_n}{U}\bigr)\iff\Qp{\varphi(\tau_1,\dots,\tau_n)}\in U.
\]
\end{theorem}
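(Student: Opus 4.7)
The plan is to proceed by induction on the complexity of the formula $\varphi$, following the classical Łoś argument but replacing ``almost everywhere'' with the Boolean value $\Qp{\cdot}$. Before starting the induction, I would record a preliminary lemma stating that $\Qp{\varphi(\tau_1,\dots,\tau_n)}$ does not depend on the choice of common refinement $W$: if $W'$ is a further refinement of $W$, then every $w'\in W'$ lies below a unique $w\in W$, the reduced values $(\tau_i\mathbin{\downarrow}W')(w')$ equal $(\tau_i\mathbin{\downarrow}W)(w)$, and so each $w\in W$ satisfying the defining condition is exactly the join of the $w'\le w$ that do. This ensures the Boolean value is well-defined.

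For the base case I would handle equality separately, since $\Qp{\tau=\sigma}\in U$ is literally the definition of $\tau\equiv_U\sigma$, so $\eq{\tau}{U}=\eq{\sigma}{U}$ in $\bu{\M}{\B}{U}$ iff this holds. For a relation symbol $R$ interpreted in the Boolean-valued structure in the analogous way, the equivalence is immediate from how the interpretation is set up on equivalence classes (after verifying that it respects $\equiv_U$, which follows from the equality case applied to substitution). For the Boolean connectives, the key identities are $\Qp{\neg\varphi}=\neg\Qp{\varphi}$ and $\Qp{\varphi\wedge\psi}=\Qp{\varphi}\wedge\Qp{\psi}$; both follow by a direct unwinding on a common refinement. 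The equivalence with membership in $U$ is then just the ultrafilter property: $\neg b\in U\iff b\notin U$, and $b\wedge c\in U\iff b,c\in U$.

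The substantive step is the existential quantifier case, and this is where I expect the real work. Suppose inductively the result holds for $\varphi(x,\bar y)$, and consider $\exists x\,\varphi(x,\bar\tau)$. The $(\Longleftarrow)$ direction is easy: if $\sigma$ is any name such that $\Qp{\varphi(\sigma,\bar\tau)}\in U$, then on any common refinement $W$ of $\dom(\sigma)$ and $\dom(\bar\tau)$ each $w\in W$ contributing to $\Qp{\varphi(\sigma,\bar\tau)}$ also contributes to $\Qp{\exists x\,\varphi(x,\bar\tau)}$, so $\Qp{\varphi(\sigma,\bar\tau)}\le\Qp{\exists x\,\varphi(x,\bar\tau)}$ and the latter lies in $U$. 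For $(\Longrightarrow)$, assume $b:=\Qp{\exists x\,\varphi(x,\bar\tau)}\in U$. Let $W$ be a common refinement of $\dom(\bar\tau)$; then $b=\bigvee\{w\in W\mid \M\models\exists x\,\varphi(x,(\bar\tau\mathbin{\downarrow}W)(w))\}$. For each such $w$, use choice to pick a witness $m_w\in M$ with $\M\models\varphi(m_w,(\bar\tau\mathbin{\downarrow}W)(w))$; for the remaining $w\in W$, pick an arbitrary value. This defines a name $\sigma\colon W\to M$. By construction, $b\le\Qp{\varphi(\sigma,\bar\tau)}$, so $\Qp{\varphi(\sigma,\bar\tau)}\in U$, and by the inductive hypothesis $\bu{\M}{\B}{U}\models\varphi(\eq{\sigma}{U},\eq{\bar\tau}{U})$, giving the existential statement in the ultrapower.

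The main obstacle is therefore the witness-construction in the existential case: one must choose witnesses element-wise across a maximal antichain and package them into a name defined on that same antichain, which relies on completeness of $\B$ (to form the join) and the fact that a maximal antichain is itself a valid domain for a name. Everything else is bookkeeping about common refinements and reductions, which by the preliminary remark can be streamlined.
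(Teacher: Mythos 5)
Your proof plan is correct and is essentially the standard argument: the paper itself offers no proof of this statement, citing Mansfield's Theorem 1.5, and your induction on formula complexity --- well-definedness of the Boolean value under refinement, the ultrafilter properties for the connectives, and the witness-construction over a maximal antichain (Mansfield's ``fullness'', Theorem 1.4 of his paper, which the present paper invokes later in the proof of Theorem \ref{theorem:moralsat}) for the existential step --- is exactly how the cited source proceeds. The only blemish is cosmetic: in the quantifier case you have swapped the labels of the two directions (the ``easy'' monotonicity step belongs to the forward implication, the witness construction to the converse), but the mathematical content of both is right.
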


The problem of finding a translation of Theorem \ref{theorem:keislerultsat} for Boolean ultrapowers was first considered by Mansfield \cite{MANSFIELD}, who defined good ultrafilters in a way formally analogous to Definition \ref{definition:good}.

\begin{definition}[Mansfield \cite{MANSFIELD}]\label{definition:mansfieldgood} Let $\lambda$ be a cardinal. An ultrafilter $U$ on a complete Boolean algebra $\B$ is \emph{$\lambda$-good} if for every $\kappa<\lambda$ and every monotonic function $f\colon\fin{\kappa}\to U$, there exists a multiplicative function $g\colon\fin{\kappa}\to U$ with the property that $g(S)\le f(S)$ for all $S\in\fin{\kappa}$.
\end{definition}

Using this definition, he was able to generalize one of the two implications of Theorem \ref{theorem:keislerultsat}.

\begin{theorem}[{Mansfield \cite[Theorem 4.1]{MANSFIELD}}]\label{theorem:goodmansfield} Let $\lambda$ be an uncountable cardinal, $\B$ a complete Boolean algebra, and $U$ an ultrafilter on $\B$. If $U$ is $\aleph_1$-incomplete and $\lambda$-good, then for every $L$-structure $\M$ with $\abs{L}<\lambda$, the Boolean ultrapower $\bu{\M}{\B}{U}$ is $\lambda$-saturated.
\end{theorem}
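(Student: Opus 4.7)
The plan is to realize an arbitrary consistent type
\[
p(x) = \Set{\varphi_\alpha\bigl(x, \eq{\bar\sigma_\alpha}{U}\bigr) | \alpha < \kappa}
\]
with $\kappa < \lambda$ in $\bu{\M}{\B}{U}$. By Theorem \ref{theorem:los} applied to finite sub-types, the function
\[
f(S) = \Qp{\exists x \bigwedge_{\alpha \in S} \varphi_\alpha(x, \bar\sigma_\alpha)}
\]
maps $\fin{\kappa}$ monotonically into $U$. Using $\aleph_1$-incompleteness, fix a decreasing sequence $\Seq{u_n | n<\omega} \subseteq U$ with $\bigwedge_n u_n = \0$ and define $f'(S) = f(S) \wedge u_{\abs{S}}$, still monotonic and valued in $U$. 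Applying $\lambda$-goodness (Definition \ref{definition:mansfieldgood}) to $f'$ produces a multiplicative $g\colon\fin{\kappa} \to U$ with $g(S) \le f(S) \wedge u_{\abs{S}}$ for every $S$.

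Since $g(S) \le u_{\abs{S}}$, condition \ref{lemma:bounduno} of Lemma \ref{lemma:bound} holds, so there is a maximal antichain $A$ such that each $g(\{\alpha\})$ is based on $A$ and each $S_a = \Set{\alpha<\kappa | a \le g(\{\alpha\})}$ is finite. By multiplicativity, $a \le g(S_a) \le f(S_a)$ for every $a \in A$. Fixing $a$, only the finitely many name-domains appearing in the parameters $\bar\sigma_\alpha$, $\alpha \in S_a$, are relevant, so I can choose a common refinement $W_a$ of them below $a$. The inequality $a \le f(S_a)$ and the Boolean semantics of the existential quantifier then force every $w \in W_a$ to satisfy
\[
\M \models \exists x \bigwedge_{\alpha \in S_a} \varphi_\alpha\bigl(x, (\bar\sigma_\alpha \mathbin{\downarrow} W_a)(w)\bigr),
\]
so I pick such a witness $m_w \in M$ for each $w$.

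Gluing across $a \in A$ produces a maximal antichain $W = \bigcup_{a \in A} W_a$ of $\B$, and $\tau(w) = m_w$ is a well-defined name in $\bp{M}{\B}$. To verify realization, fix $\alpha < \kappa$: since $g(\{\alpha\})$ is based on $A$, $g(\{\alpha\}) = \bigvee\Set{a \in A | \alpha \in S_a}$, and for any such $a$ the construction of $\tau$ on $W_a$ yields $a \le \Qp{\varphi_\alpha(\tau, \bar\sigma_\alpha)}$. Hence $g(\{\alpha\}) \le \Qp{\varphi_\alpha(\tau, \bar\sigma_\alpha)} \in U$, and Theorem \ref{theorem:los} gives $\bu{\M}{\B}{U} \models \varphi_\alpha\bigl(\eq{\tau}{U}, \eq{\bar\sigma_\alpha}{U}\bigr)$. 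The main delicate point is the middle step: one must coordinate the local choices $m_w$ so that the pieces $W_a$ assemble into a single global antichain on which $\tau$ is coherently defined, and this is precisely where the finiteness of each $S_a$ (granted by Lemma \ref{lemma:bound} via $\aleph_1$-incompleteness) reduces the problem to ordinary first-order satisfaction of a finite conjunction in $\M$.
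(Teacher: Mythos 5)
Your proof is correct. The paper quotes this theorem from Mansfield without reproving it, but your argument follows essentially the route the paper itself deploys in the forward direction of Theorem~\ref{theorem:moralsat} (and Mansfield's original proof): pad the monotonic function coming from the type with a decreasing witness to $\aleph_1$-incompleteness, so that the multiplicative refinement $g$ supplied by $\lambda$-goodness satisfies condition~\ref{lemma:bounduno} of Lemma~\ref{lemma:bound}, and then use the resulting maximal antichain, on each element of which only finitely many formulas are ``active'', to assemble a realizing name. The only divergence is in the final gluing step: the paper's parallel argument invokes fullness of $\bp{M}{\B}$ to choose names $\tau_w$ realizing each finite conjunction and then the mixing lemma to combine them into a single $\tau$ with $w\le\Qp{\tau=\tau_w}$, whereas you refine each $a\in A$ further so that the finitely many relevant parameters become constant and pick actual witnesses in $M$ by hand. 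Both are valid; your version is more elementary and self-contained, while the paper's is more modular, delegating the coherence of the local choices to the fullness and mixing lemmas rather than to an explicit refinement of antichains.
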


Three years later, Benda \cite{benda:ultrapowers} observed that, using Mansfield's definition, `it is not straightforward to prove the other implication'. To get around this problem, Benda introduced another class of ultrafilters, which he also called `$\lambda$-good'. To avoid a clash of terminology, we shall temporarily rename such ultrafilters `$\lambda$-Benda'. 

\begin{definition}[Benda \cite{benda:ultrapowers}] Let $\lambda$ be a cardinal. An ultrafilter $U$ on a complete Boolean algebra $\B$ is \emph{$\lambda$-Benda} if for every $\kappa<\lambda$, for every function $f\colon\fin{\kappa}\to U$, and every family of maximal antichains $\Set{A_\alpha | \alpha<\kappa}$, if for all $S\in\fin{\kappa}$ $f(S)$ is based on $\bigwedge_{\alpha\in S}A_\alpha$, then there exist a multiplicative function $g\colon\fin{\kappa}\to U$ and a maximal antichain $A$ satisfying:
\begin{enumerate}
\item for all $S\in\fin{\kappa}$, $g(S)$ is based on $A\wedge\bigwedge_{\alpha\in S}A_\alpha$;
\item for all $a\in A$, the set $\Set{\alpha<\kappa | a\wedge g(\{\alpha\})>\0}$ is finite;
\item for all $S\in\fin{\kappa}$, $g(S)\le f(S)$.
\end{enumerate}
\end{definition}

The above definition, although quite complex, was specifically designed to establish the following equivalence.

\begin{theorem}[Benda \cite{benda:ultrapowers}]\label{theorem:goodbenda} Let $\lambda$ be an uncountable cardinal. For an ultrafilter $U$ on a complete Boolean algebra $\B$, the following conditions are equivalent:
\begin{itemize}
\item $U$ is $\lambda$-Benda; 
\item for every $L$-structure $\M$ with $\abs{L}<\lambda$, the Boolean ultrapower $\bu{\M}{\B}{U}$ is $\lambda$-saturated.
\end{itemize}
\end{theorem}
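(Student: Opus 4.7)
I prove both implications separately.

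For the direction that $\lambda$-Benda implies $\lambda$-saturation of every Boolean ultrapower $\bu{\M}{\B}{U}$ with $\abs{L}<\lambda$, I adapt the argument behind Mansfield's Theorem \ref{theorem:goodmansfield}, using the antichain supplied directly by the $\lambda$-Benda property and thereby bypassing Lemma \ref{lemma:bound}. Given a finitely satisfiable type $p(x)=\Set{\varphi_\alpha(x,\bar\tau_\alpha) | \alpha<\kappa}$ over $\bu{\M}{\B}{U}$ with $\kappa<\lambda$, define $f(S):=\Qp{\exists x\,\bigwedge_{\alpha\in S}\varphi_\alpha(x,\bar\tau_\alpha)}$ and let $A_\alpha$ be a maximal antichain refining the domains of all names appearing in $\bar\tau_\alpha$; then $f(S)\in U$ and $f(S)$ is based on $\bigwedge_{\alpha\in S}A_\alpha$. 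The $\lambda$-Benda property yields a multiplicative $g\le f$ and a maximal antichain $A$ satisfying (1)--(3). For each $a\in A$, condition (2) makes $S_a=\Set{\alpha | a\wedge g(\{\alpha\})>\0}$ finite, so $a$ can be refined by the finite meet $\bigwedge_{\alpha\in S_a}A_\alpha$; on each piece $b\le a$ of this refinement the subset $T_b=\Set{\alpha | b\le g(\{\alpha\})}$ is a finite subset of $S_a$, and multiplicativity of $g$ combined with (3) gives $b\le g(T_b)\le f(T_b)=\Qp{\exists x\,\bigwedge_{\alpha\in T_b}\varphi_\alpha(x,\bar\tau_\alpha)}$. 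I choose a witness below each $b$ and glue them into a name $\sigma$. A short computation, based on the decomposition $g(\{\beta\})=\bigvee_{a\in A}(a\wedge g(\{\beta\}))$ and condition (1), gives $\Qp{\varphi_\beta(\sigma,\bar\tau_\beta)}\ge g(\{\beta\})\in U$ for every $\beta<\kappa$, so $[\sigma]_U$ realizes $p$.

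For the converse I construct, from $f$ and $\Set{A_\alpha | \alpha<\kappa}$ satisfying the Benda hypothesis, an $L$-structure $\M$ with $\abs{L}=\kappa<\lambda$ and a type $p(x)$ of size at most $\kappa$ whose realization produces $g$ and $A$. Reducing first to the monotonic case by replacing $f$ with $\tilde f(S):=\bigwedge_{\emptyset\ne T\subseteq S}f(T)$, which remains in $U$ and is still based on $\bigwedge_{\alpha\in S}A_\alpha$, I take $\M=(\fin{\kappa},\in,c_\alpha)_{\alpha<\kappa}$, the finite subsets of $\kappa$ under membership with constants for ordinals. For each $S\in\fin{\kappa}$ define a name $\mu_S\in\bp{M}{\B}$ with $\dom(\mu_S)=\bigwedge_{\alpha\in S}A_\alpha$, taking the value $S$ on pieces below $f(S)$ and the value $\emptyset$ on the complement. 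The type $p(x)$ consists of the formulas $c_\alpha\in x$ for each $\alpha<\kappa$, together with formulas asserting the inclusion $\mu_S\subseteq x$ for $S\in\fin{\kappa}$; finite satisfiability of $p$ is immediate from the based-on hypothesis. By saturation, $p$ has a realization $[\sigma]_U$, which I modify into a name $\sigma'$ as follows: for each $a\in\dom(\sigma)$ I refine $a$ by the finite meet $\bigwedge_{\alpha\in\sigma(a)}A_\alpha$, and on each piece $b$ of this refinement I set $\sigma'(b)$ to be a maximal subset $T\subseteq\sigma(a)$ with $b\le f(T)$; such a maximum exists because monotonicity of $f$ makes the family $\Set{T\subseteq\sigma(a) | b\le f(T)}$ subset-closed. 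Setting $A:=\dom(\sigma')$ and defining $g(\{\alpha\}):=\Qp{c_\alpha\in\sigma'}$, extended multiplicatively, yields conditions (1) and (2) automatically (since each $\sigma'(b)$ is finite) and condition (3) since $b\le f(\sigma'(b))\le f(S)$ whenever $S\subseteq\sigma'(b)$.

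The main obstacle is that condition (3) must be enforced \emph{pointwise} in $\B$ rather than merely modulo $U$: any straightforward first-order formula encoding "$S\subseteq x$ implies $f(S)$" gives, via saturation, only $\Qp{S\subseteq\sigma}\wedge\neg f(S)\notin U$, strictly weaker than $\Qp{S\subseteq\sigma}\le f(S)$. The refinement $\sigma\mapsto\sigma'$ above resolves this, but at the cost of making the maximal sets $T\subseteq\sigma(a)$ with $b\le f(T)$ non-unique, so no single choice of maximum can be simultaneously favourable for every $\alpha<\kappa$. The delicate verification is therefore that $g(\{\alpha\})\in U$ for every $\alpha$: for each fixed $\alpha$, $\Qp{c_\alpha\in\sigma}\wedge f(\{\alpha\})$ already lies in $U$ and provides a lower bound on the pieces $b$ where \emph{some} maximal $T$ can include $\alpha$, and a careful choice of maxima---combined with the basedness structure of $f$ and, if required, a further appeal to saturation at a parameter set of size $\kappa$---is expected to show that $\Set{b | \alpha\in\sigma'(b)}$ joins above a $U$-element.
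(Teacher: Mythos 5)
First, a remark on scope: the paper does not prove Theorem \ref{theorem:goodbenda} at all --- it is quoted from Benda's 1974 note --- so there is no in-paper proof to compare against; the closest analogue in the paper is the reverse direction of Theorem \ref{theorem:moralsat}. Your first implication ($\lambda$-Benda implies saturation) is correct: the function $f(S)=\Qp{\exists x\bigwedge_{\alpha\in S}\varphi_\alpha(x,\bm{\tau}_\alpha)}$ is indeed based on $\bigwedge_{\alpha\in S}A_\alpha$, each $g(\{\alpha\})$ is based on the refinement $W$ you build from $A$ and the finitely many $A_\alpha$ with $\alpha\in S_a$, and the fullness-plus-mixing argument then gives $\Qp{\varphi_\beta(\sigma,\bm{\tau}_\beta)}\ge g(\{\beta\})\in U$ exactly as in the last part of the paper's proof of Theorem \ref{theorem:moralsat}. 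This is the direction Benda's definition was designed for, and your write-up of it is sound.

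The converse, however, has a genuine gap, which you yourself flag: the verification that $g(\{\alpha\})=\Qp{c_\alpha\in\sigma'}\in U$ is not carried out, and I do not believe your construction can deliver it. The root of the problem is that your type $p(x)=\{c_\alpha\in x:\alpha<\kappa\}\cup\{\mu_S\subseteq x:S\in\fin{\kappa}\}$ is finitely satisfiable for trivial reasons (the constant name with value $\{\alpha_1,\dots,\alpha_n\}\cup S_1\cup\dots\cup S_m$ satisfies any finite fragment with Boolean value $\1$), so a realization $\sigma$ carries no upper-bound information tying $\Qp{S\subseteq\sigma}$ to $f(S)$; the formulas $\mu_S\subseteq x$ only push $\sigma$ to be \emph{large}, whereas condition (3) of Benda's definition needs $\sigma$ to be \emph{small} in the precise sense $\Qp{S\subseteq\sigma}\le f(S)$. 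Your post-hoc trimming $\sigma\mapsto\sigma'$ enforces (3) by fiat, but at the cost of (2$'$): on a piece $b$ with $b\le\tilde f(\{1\})$, $b\le\tilde f(\{2\})$ and $b\wedge\tilde f(\{1,2\})=\0$, every maximal admissible $T$ omits $1$ or omits $2$, and nothing controls whether the set of pieces on which a fixed $\alpha$ is discarded is $U$-small. ``A careful choice of maxima'' cannot resolve this, because the conflict is combinatorial, not a matter of bookkeeping.

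The repair requires encoding $f$ into the \emph{structure} rather than into the type, so that the relevant Boolean values are forced to equal $f(S)$ (up to the regularizing elements) on each piece of a suitable antichain --- this is precisely the role of condition \eqref{eq:possibility} in Definition \ref{definition:possibility} and of identity \eqref{eq:ptype} in the proof of Theorem \ref{theorem:moralsat}. There, one first refines to an antichain $W$ on which each $f(S)$ is decided (Claim \ref{claim:wa}), then chooses parameters $\bm{\tau}_\alpha(a)$ piecewise so that $a\le f(S)$ holds \emph{if and only if} $\M\models\exists x\bigwedge_{\alpha\in S}\varphi_\alpha(x,\bm{\tau}_\alpha(a))$ (Claim \ref{claim:wb}); only then does Ło\'s's theorem convert a realization of the type into a multiplicative $g$ with $g(S)\le f(S)$ pointwise in $\B$. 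For Benda's theorem one must additionally manufacture a theory and formulas for which an arbitrary monotonic pattern can be realized by parameters in this way; your structure $(\fin{\kappa},\in,c_\alpha)_{\alpha<\kappa}$ with the formula $c_\alpha\in x$ cannot do this, since $\exists x\bigwedge_{\alpha\in S}(c_\alpha\in x)$ is valid for every $S$. Until that step is supplied, the converse direction is not proved.
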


Clearly, the combination of Theorem \ref{theorem:goodmansfield} and Theorem \ref{theorem:goodbenda} shows that if an ultrafilter $U$ is $\aleph_1$-incomplete and $\lambda$-good, then it is $\lambda$-Benda. However, the question whether Benda's notion is actually weaker remained open.

In 1982, Balcar and Franek \cite{balfran:cba} discussed the existence of independent families in complete Boolean algebras. They acknowledged the existence of two different definitions of goodness for ultrafilters on complete Boolean algebras, and noted that Mansfield's definition `is apparently stronger than Benda's for it implies that even the Boolean valued model of set theory modulo a $\kappa$-good ultrafilter is $\kappa$-saturated'. Indeed, it is true that if an ultrafilter $U$ on $\B$ satisfies Mansfield's definition of $\kappa$-goodness, then the quotient of any full $\B$-valued model by $U$ is $\kappa$-saturated. The reader can find a proof of this fact in \cite[Theorem 2.2.5]{parente:bvm}.

Motivated by Benda's question and Balcar and Franek's remark, we decided to further investigate this problem. In the main result of this section, we finally settle this question by showing that Mansfield's definition, although apparently stronger, is in fact equivalent to Benda's definition.

\begin{theorem}\label{theorem:mansfieldbenda} Let $\lambda$ be an uncountable cardinal, $\B$ be a complete Boolean algebra, and $U$ an ultrafilter on $\B$. Then $U$ is $\lambda$-Benda if and only if $U$ is $\aleph_1$-incomplete and $\lambda$-good.
\end{theorem}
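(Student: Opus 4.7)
My plan is to handle the two implications separately, both revolving around Lemma~\ref{lemma:bound}.

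For the forward direction $(\aleph_1\text{-incomplete}\wedge\lambda\text{-good})\Rightarrow\lambda\text{-Benda}$, suppose we are given $\kappa<\lambda$, a function $f\colon\fin{\kappa}\to U$, and antichains $\{A_\alpha:\alpha<\kappa\}$ with $f(S)$ based on $\bigwedge_{\alpha\in S}A_\alpha$ for every $S$. I would first replace $f$ by its monotonization $\tilde f(S)=\bigwedge_{T\subseteq S}f(T)$; as a finite meet of members of $U$ it lies in $U$, and since each $f(T)$ for $T\subseteq S$ is based on the coarser antichain $\bigwedge_{\alpha\in T}A_\alpha$, it is also based on the refinement $\bigwedge_{\alpha\in S}A_\alpha$, and so is $\tilde f(S)$ as a meet of such. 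Using $\aleph_1$-incompleteness I would fix a decreasing sequence $\1=b_0\geq b_1\geq\cdots$ in $U$ with $\bigwedge_n b_n=\0$, form the monotonic function $f^*(S)=\tilde f(S)\wedge b_{|S|}$, and apply $\lambda$-goodness to obtain a multiplicative $g\colon\fin{\kappa}\to U$ with $g(S)\leq f^*(S)\leq b_{|S|}$. Then $\bigwedge_n\bigvee_{|S|=n}g(S)\leq\bigwedge_n b_n=\0$, so the direction \ref{lemma:bounduno}$\Rightarrow$\ref{lemma:bounddue} of Lemma~\ref{lemma:bound} produces a maximal antichain $A$ on which every $g(\{\alpha\})$ is based and for which each $a\in A$ lies below only finitely many $g(\{\alpha\})$. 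By multiplicativity, every $g(S)$ is then based on $A$, hence on the refinement $A\wedge\bigwedge_{\alpha\in S}A_\alpha$, and together with $g(S)\leq f(S)$ this verifies the three $\lambda$-Benda conditions.

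For the converse, $\aleph_1$-incompleteness is the quick half. Apply $\lambda$-Benda to the trivial data $\kappa=\aleph_0$, $f\equiv\1$, $A_\alpha=\{\1\}$; the returned multiplicative $g$ and antichain $A$ satisfy item~\ref{lemma:bounddue} of Lemma~\ref{lemma:bound} by construction (each $g(\{\alpha\})$ is based on $A\wedge\{\1\}=A$ from condition~(1) of $\lambda$-Benda, and for based elements $a\wedge g(\{\alpha\})>\0\iff a\leq g(\{\alpha\})$, so condition~(2) of $\lambda$-Benda matches the finiteness clause of the lemma), so the reverse implication of the lemma yields $\bigwedge_n\bigvee_{|S|=n}g(S)=\0$. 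Since each $\bigvee_{|S|=n}g(S)$ dominates some $g(S_0)\in U$, this produces a decreasing sequence of elements of $U$ with trivial intersection, witnessing $\aleph_1$-incompleteness.

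The genuinely non-trivial step is extracting $\lambda$-goodness. Given monotonic $f\colon\fin{\kappa}\to U$, the plan is to invoke $\lambda$-Benda with carefully chosen antichains $A_\alpha$ and an auxiliary function $f'$ satisfying its based hypothesis, and to extract a multiplicative refinement of $f$ from its conclusion. The main obstacle here, and the crux of the theorem, is that a naive choice such as $A_\alpha=\{f(\{\alpha\}),\neg f(\{\alpha\})\}$ paired with $f'=f$ forces $f(S)$ to coincide with the all-yes cell $\bigwedge_{\alpha\in S}f(\{\alpha\})$, which holds only when $f$ is already multiplicative; moreover, in a general complete Boolean algebra, failures of higher distributivity prevent one from refining the whole range of $f$ into a single maximal antichain. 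I anticipate circumventing this by using the antichain $\{b_n\wedge\neg b_{n+1}:n<\omega\}$ arising from the $\aleph_1$-incompleteness just established, refining it by the binary partitions $\{f(\{\alpha\}),\neg f(\{\alpha\})\}$, and applying $\lambda$-Benda to the automatically based truncated function $S\mapsto\bigwedge_{\alpha\in S}f(\{\alpha\})\wedge b_{|S|}$. The finite-support antichain guaranteed by condition~(2) of $\lambda$-Benda, combined with the monotonicity of $f$, should then allow a combinatorial pruning of the resulting multiplicative witness so that it lands inside $f(S)$ rather than merely inside the all-yes cell; this final adjustment is where I expect the bulk of the technical work to lie.
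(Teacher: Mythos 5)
Your treatment of the direction ($\aleph_1$-incomplete and $\lambda$-good) $\Rightarrow$ $\lambda$-Benda is correct and is in fact a genuinely different route from the paper's: the paper obtains this implication by concatenating Theorems~\ref{theorem:goodmansfield} and~\ref{theorem:goodbenda}, i.e.\ by passing through saturation of Boolean ultrapowers, whereas you give a direct combinatorial argument --- monotonize $f$, truncate by a decreasing witness $\langle b_n \mid n<\omega\rangle$ to $\aleph_1$-incompleteness so that the multiplicative refinement $g$ supplied by goodness satisfies condition~\ref{lemma:bounduno} of Lemma~\ref{lemma:bound}, and read off the required antichain $A$ from that lemma. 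This works (modulo the harmless normalization $g(\emptyset)=\1$, so that clause (1) of the Benda definition also holds for $S=\emptyset$) and has the merit of keeping the whole equivalence purely Boolean-algebraic. Your derivation of $\aleph_1$-incompleteness from $\lambda$-Benda is also fine and agrees in substance with the paper's.

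The gap is exactly in the step you flag as carrying ``the bulk of the technical work'': extracting $\lambda$-goodness. Applying $\lambda$-Benda to the truncated all-yes-cell function $S\mapsto\bigwedge_{\alpha\in S}f(\{\alpha\})\wedge b_{|S|}$ yields a multiplicative $g$ with $g(S)\le\bigwedge_{\alpha\in S}f(\{\alpha\})$, but since $f(S)\le\bigwedge_{\alpha\in S}f(\{\alpha\})$ with the inequality generally strict, $g(S)$ need not lie below $f(S)$. The ``combinatorial pruning'' you propose does not repair this: if you cut $g$ down to the part lying below $f$, say $g'(S)=\bigvee\left\{w\in W \mid w\le f(S)\wedge g(S)\right\}$ for some suitable antichain $W$, then $g'(S\cup T)\le g'(S)\wedge g'(T)$ can be strict precisely because $f(S\cup T)$ may sit strictly below $f(S)\wedge f(T)$ --- this is the original defect of $f$ that goodness is supposed to repair, and no post-hoc truncation of a multiplicative function dominating the all-yes cells will remove it. What is needed, and what the paper does, is a \emph{second} application of the $\lambda$-Benda hypothesis: from the first (trivial) application one gets a multiplicative $x$ and an antichain $A$ with each $S(a)=\left\{\alpha<\kappa \mid a\le x(\{\alpha\})\right\}$ finite; a density argument (refining, below each $a\in A$, the finitely many partitions $\{f(R),\neg f(R)\}$ for $R\subseteq S(a)$) produces a single maximal antichain $W$ on which every $f(S)\wedge x(S)$ is based; one then applies $\lambda$-Benda to $S\mapsto f(S)\wedge x(S)$ with all $A_\alpha=W$, and its conclusion directly hands you a multiplicative $g$ with $g(S)\le f(S)\wedge x(S)\le f(S)$. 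Your single-application-plus-pruning plan cannot be completed as stated.
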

\begin{proof} As we have already observed, one implication follows immediately combining Theorem \ref{theorem:goodmansfield} and Theorem \ref{theorem:goodbenda}.

For the other implication, suppose $U$ is $\lambda$-Benda: we shall show that $U$ is $\aleph_1$-incomplete and $\lambda$-good. Let $\kappa<\lambda$ be a cardinal which, without loss of generality, we may assume to be infinite.

Firstly, consider the constant function $c\colon\fin{\kappa}\to U$ defined by $c(S)=\1$ for all $S\in\fin{\kappa}$. By taking $A_\alpha=\{\1\}$ for each $\alpha<\kappa$, it is trivial that for all $S\in\fin{\kappa}$ $c(S)$ is based on $\bigwedge_{\alpha\in S}A_\alpha=\{\1\}$. Therefore, we can use the hypothesis that $U$ is $\lambda$-Benda to obtain a multiplicative function $x\colon\fin{\kappa}\to U$ and a maximal antichain $A\subset\B$ such that:
\begin{enumerate}
\item\label{mansfieldbendauno} for all $S\in\fin{\kappa}$, $x(S)$ is based on $A$;
\item\label{mansfieldbendadue} for all $a\in A$, the set defined as $S(a)=\Set{\alpha<\kappa | a\le x(\{\alpha\})}$ is finite.
\end{enumerate}

To show that $U$ is $\aleph_1$-incomplete, we recall that $\kappa$ is infinite and observe that
\[
\bigwedge_{n<\omega}x(\{n\})=\0.
\]
Indeed, if we had $\bigwedge_{n<\omega}x(\{n\})>\0$ then we could find some $a\in A$ such that $a\wedge\bigwedge_{n<\omega}x(\{n\})>\0$. From property \ref{mansfieldbendauno} we would have $a\le\bigwedge_{n<\omega}x(\{n\})$, contradicting property \ref{mansfieldbendadue}. Therefore, $U$ is $\aleph_1$-incomplete.

To show that $U$ is $\lambda$-good, let $f\colon\fin{\kappa}\to U$ be a monotonic function. We claim that there exists a maximal antichain $W$ such that for each $S\in\fin{\kappa}$, $f(S)\wedge x(S)$ is based on $W$. Our claim will be proved once we show that the set
\[
D=\Set{d\in\B\setminus\{\0\} | \text{for all }S\in\fin{\kappa},\text{ either }d\le f(S)\wedge x(S)\text{ or }d\wedge f(S)\wedge x(S)=\0}
\]
is dense in $\B$: any maximal antichain $W\subseteq D$ will then have the desired property. Let $b\in\B\setminus\{\0\}$; we shall find some $d\in D$ such that $d\le b$. First, we can find some $a\in A$ such that $\0<a\wedge b$. Now by property \ref{mansfieldbendadue} the set $S(a)$ is finite, so let $P$ be a common refinement of the finitely many maximal antichains $\{f(R),\neg f(R)\}$ for $R\subseteq S(a)$. Let $p\in P$ be such that $\0<p\wedge a\wedge b$; then it is clear that $d=p\wedge a\wedge b$ is such that $d\le b$. To see that $d\in D$, suppose $S\in\fin{\kappa}$ has the property that $\0<d\wedge f(S)\wedge x(S)$. In particular, we have $\0<a\wedge x(S)$, but $x(S)$ is based on $A$ and therefore $a\le x(S)$. This implies, since $x$ is monotonic, that $S\subseteq S(a)$. We deduce that $f(S)$ is based on $P$ and therefore $p\le f(S)$. Putting everything together, we conclude that
\[
d\le p\wedge a\le f(S)\wedge x(S).
\]
This shows that $d\in D$ and completes the proof of the claim.

Now, letting $W_\alpha=W$ for each $\alpha<\kappa$, we deduce from our claim that for all $S\in\fin{\kappa}$, $f(S)\wedge x(S)$ is based on $\bigwedge_{\alpha\in S}W_\alpha=W$. Hence, we can use the hypothesis that $U$ is $\lambda$-Benda to obtain a multiplicative function $g\colon\fin{\kappa}\to U$ such that for all $S\in\fin{\kappa}$, $g(S)\le f(S)\wedge x(S)\le f(S)$, as desired. Therefore, $U$ is $\lambda$-good and the proof is complete.
\end{proof}

In conclusion, Definition \ref{definition:mansfieldgood} can be regarded as an appropriate generalization of the notion of goodness to the context of complete Boolean algebras, for it implies that the parallel of Theorem \ref{theorem:keislerultsat} holds in full generality for Boolean ultrapowers.

\section{Keisler's order and saturation of Boolean ultrapowers}\label{section:tre}

In this section we present some applications of Boolean ultrapowers to the study of Keisler's order. Even though Keisler's order is defined via regular ultrafilters over sets, in the last years there has been a shift towards the construction of ultrafilters on complete Boolean algebras, using the framework of `separation of variables' developed by Malliaris and Shelah \cite{ms:dl}. In particular, moral ultrafilters (Definition~\ref{definition:moral}) have emerged as the main tool to find dividing lines among unstable theories. Motivated by these ideas, in Theorem \ref{theorem:kobu} we establish a characterization of Keisler's order via regular ultrafilters on complete Boolean algebras.

A preliminary remark on notation: when we introduce a formula as $\varphi(\bm{x})$, we mean that $\bm{x}$ is a finite tuple of variables including the ones appearing free in $\varphi$. If we then write $\varphi(\bm{a})$, we shall implicitly assume that $\bm{a}$ is a finite tuple of parameters of the same length as the tuple $\bm{x}$. By abuse of notation, tuples of functions will be sometimes treated as single functions, with the convention that if $\bm{\tau}=\langle\tau_1,\dots,\tau_n\rangle$, then $\bm{\tau}(b)=\langle\tau_1(b),\dots,\tau_n(b)\rangle$. Throughout this section, cardinals $\kappa$ and $\lambda$ will be assumed to be infinite.

We have previously analysed and compared two different ways in which Definition~\ref{definition:regularpow} can be generalized to ultrafilters on complete Boolean algebras. Here we just recall the relevant definition and refer the reader to \cite{parente:oru} for further details on regularity of ultrafilters.

\begin{definition}[Shelah \cite{sh:1064}]\label{definition:regular} Let $\kappa$ be a cardinal. An ultrafilter $U$ on a complete Boolean algebra $\B$ is \emph{$\kappa$-regular} if there exist a family $\Set{x_\alpha | \alpha<\kappa}\subseteq U$ and a maximal antichain $A\subset\B$ such that:
\begin{itemize}
\item for every $\alpha<\kappa$, $x_\alpha$ is based on $A$;
\item for every $a\in A$, the set $\Set{\alpha<\kappa | a\le x_\alpha}$ is finite.
\end{itemize}
\end{definition}

We now introduce the crucial concept of \emph{morality}, which can be thought of as a `local' version of goodness relative to some theory $T$. The meaning of `relative to $T$' is made precise in the definition of \emph{possibility}.

\begin{definition}\label{definition:possibility} Let $\kappa$ be a cardinal, $\B$ a complete Boolean algebra, $T$ a complete countable theory, and $\varphi=\Seq{\varphi_\alpha(x,\bm{y}_\alpha) | \alpha<\kappa}$ a sequence of formulae in the language of $T$.

A \emph{$\langle\kappa,\B,T,\varphi\rangle$-possibility} is a monotonic function $f\colon\fin{\kappa}\to\B\setminus\{\0\}$ such that: for all $S_*\in\fin{\kappa}$ and $a\in\B\setminus\{\0\}$ which satisfy:
\begin{itemize}
\item for every $S\subseteq S_*$ either $a\le f(S)$ or $a\wedge f(S)=\0$,
\item $S_*\subseteq\Set{\alpha<\kappa | a\le f(\{\alpha\})}$,
\end{itemize}
there exist a model $\M\models T$ and $\Set{\bm{b}_\alpha|\alpha\in S_*}$ in $M$ such that for all $S\subseteq S_*$
\begin{equation}\label{eq:possibility}
a\le f(S)\iff\M\models\exists x\bigwedge_{\alpha\in S}\varphi_\alpha(x,\bm{b}_\alpha).
\end{equation}
\end{definition}

Following our convention, in Definition \ref{definition:possibility} it is implicitly assumed that each $\bm{b}_\alpha$ is a finite tuple from $M$ of the same length as $\bm{y}_\alpha$.

\begin{definition}[Malliaris and Shelah \cite{ms:dl}]\label{definition:moral} Let $\kappa$ be a cardinal, $\B$ a complete Boolean algebra, and $T$ a complete countable theory. An ultrafilter $U$ on $\B$ is \emph{$\langle\kappa,\B,T\rangle$-moral} if for every sequence of formulae $\varphi=\Seq{\varphi_\alpha(x,\bm{y}_\alpha) | \alpha<\kappa}$ and every $\langle\kappa,\B,T,\varphi\rangle$-possibility $f\colon\fin{\kappa}\to U$, there exists a multiplicative function $g\colon\fin{\kappa}\to U$ with the property that $g(S)\le f(S)$ for all $S\in\fin{\kappa}$.
\end{definition}

Moral ultrafilters have recently played a crucial role in the study of Keisler's order, due to Malliaris and Shelah's technique of separation of variables, which is summarized in the following theorem.

\begin{theorem}[{Malliaris and Shelah \cite[Theorem 6.13]{ms:dl}}]\label{theorem:separation} Let $\kappa$ be a cardinal and $\B$ a complete Boolean algebra. Suppose $j\colon\pow{\kappa}\to\B$ is a surjective homomorphism with the property that $j^{-1}[\{\1\}]$ is a $\kappa$-regular $\kappa^+$-good filter over $\kappa$. Let $U$ be any ultrafilter on $\B$. Then, for a complete countable theory $T$ the following conditions are equivalent:
\begin{itemize}
\item $U$ is $\langle\kappa,\B,T\rangle$-moral;
\item for a model $\M\models T$, the ultrapower $\M^\kappa\!/j^{-1}[U]$ is $\kappa^+$-saturated.
\end{itemize}
\end{theorem}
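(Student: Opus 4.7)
The plan is to use the \emph{separation of variables} philosophy: the homomorphism $j$ decouples the problem so that purely Boolean-combinatorial adjustments are controlled by the $\kappa^+$-goodness of $D=j^{-1}[\{\1\}]$, whereas the model-theoretic content of realizing types built from $\varphi$ is controlled by morality of $U$. Set $F=j^{-1}[U]$, an ultrafilter over $\kappa$ extending $D$ and inheriting $\kappa$-regularity; the property to establish on the right-hand side is saturation of $\M^\kappa\!/F$.

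For the implication morality $\Longrightarrow$ saturation, I would start with a consistent type $p=\Set{\varphi_\alpha(x,\eq{\bm{c}_\alpha}{F}) | \alpha<\kappa}$ over the ultrapower and form the monotonic existence function
\[
f^*(S)=\Set{i<\kappa | \M\models\exists x\bigwedge_{\alpha\in S}\varphi_\alpha(x,\bm{c}_\alpha(i))},
\]
which takes values in $F$ by \L o\'s's theorem. Set $h=j\circ f^*\colon\fin{\kappa}\to U$. To verify that $h$ is a $\langle\kappa,\B,T,\varphi\rangle$-possibility in the sense of Definition~\ref{definition:possibility}, for each pair $\langle S_*,a\rangle$ satisfying the hypotheses I would pick $a'\in\pow{\kappa}$ with $j(a')=a$ and observe that the set of $i\in a'$ for which $i\in f^*(S)\iff a\le h(S)$ for every $S\subseteq S_*$ differs from $a'$ only by an element of $\ker j$, hence is nonempty; any such $i$ supplies witnesses $\bm{b}_\alpha=\bm{c}_\alpha(i)$ in $\M$. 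Morality of $U$ then yields a multiplicative $g\colon\fin{\kappa}\to U$ with $g\le h$. Lifting $g$ to some $\tilde g\colon\fin{\kappa}\to F$ via surjectivity of $j$ produces a function multiplicative only modulo $\ker j$; a careful application of $\kappa^+$-goodness of $D$ absorbs this error and intersects with $f^*$ to deliver a genuinely multiplicative $g^*\colon\fin{\kappa}\to F$ with $g^*(S)\subseteq f^*(S)$. Finally, $\kappa$-regularity of $F$ guarantees that at $F$-almost every $i<\kappa$ the set $\Set{\alpha<\kappa | i\in g^*(\{\alpha\})}$ is finite, so a coordinate-wise choice of witnesses realizes $p$.

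The converse runs in reverse. Given a possibility $f\colon\fin{\kappa}\to U$, $\kappa$-regularity of $U$ (inherited from $D$ through $j$) supplies a maximal antichain $A\subset\B$ on which each $f(\{\alpha\})$ is based and such that each $a\in A$ sees only finitely many $\alpha$'s; refining $A$ further, I would ensure that every $f(S)$ with $S$ inside the finite window $S_a=\Set{\alpha<\kappa | a\le f(\{\alpha\})}$ is also based on $A$. For each $a\in A$, the definition of possibility applied to $S_*=S_a$ and $a$ itself yields a model of $T$ and local parameters; lifting $A$ through $j$ to an $F$-almost partition of $\kappa$ and pasting these local data coordinate-by-coordinate produces tuples $\bm{b}_\alpha\in M^\kappa$ whose associated existence function matches a chosen lift $f^*\colon\fin{\kappa}\to F$ of $f$ modulo $\ker j$. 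Applying $\kappa^+$-saturation of $\M^\kappa\!/F$ to the type $\Set{\varphi_\alpha(x,\eq{\bm{b}_\alpha}{F})|\alpha<\kappa}$ and projecting the natural multiplicative witness back through $j$ delivers the multiplicative $g\le f$ in $U$ demanded by morality.

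The principal obstacle is the two-way translation between $U$-data and $F$-data across $j$: in both directions, the inevitable $\ker j$-noise has to be absorbed into adjustments that preserve monotonicity, multiplicativity, and the `based on' structure. This is precisely what the hypotheses on $D$ afford, since $\kappa$-regularity provides a coordinate-by-coordinate framework in which only finitely many indices interact at once, while $\kappa^+$-goodness lets monotonic data be upgraded to multiplicative data without leaving $F$; the bulk of the technical work consists in carrying out this bookkeeping carefully.
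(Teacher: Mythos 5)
The paper does not actually prove this statement---it is imported verbatim from Malliaris and Shelah---so your attempt can only be measured against their argument. Your forward direction (morality implies saturation) is essentially the right argument: forming $h=j\circ f^*$, checking it is a possibility by pulling $a$ back to some $a'$ and discarding a set in the dual ideal $j^{-1}[\{\0\}]$, and then repairing a lift $\tilde g$ of the multiplicative $g$ by applying $\kappa^+$-goodness of $D=j^{-1}[\{\1\}]$ to the monotonic ``error'' function $S\mapsto\bigcap_{T,T'\subseteq S}\bigl(\kappa\setminus(\tilde g(T)\cap\tilde g(T')\setminus\tilde g(T\cup T'))\bigr)\in D$ is exactly how the $\ker j$-noise gets absorbed. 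The only imprecision is at the end: finiteness of $\Set{\alpha<\kappa | i\in g^*(\{\alpha\})}$ at almost every $i$ is not an automatic consequence of regularity of $F$; you must first intersect $g^*(S)$ with $\bigcap_{\alpha\in S}X_\alpha$ for a regularizing family $\Set{X_\alpha | \alpha<\kappa}\subseteq D$, which is harmless.

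The converse direction, however, has a genuine gap at its first step. You invoke ``$\kappa$-regularity of $U$ (inherited from $D$ through $j$)'' to produce a maximal antichain $A\subset\B$ on which the $f(\{\alpha\})$ are based. But Theorem \ref{theorem:separation}, unlike Theorem \ref{theorem:moralsat}, does not assume $U$ is $\kappa$-regular, and regularity does not transfer upward through $j$: the images $j(X_\alpha)$ of a regularizing family for $D$ are all equal to $\1$, which regularizes nothing. Concretely, $\B$ may be atomless, or even the two-element algebra, and then no antichain in $\B$ with your stated properties exists. The correct argument localizes downstairs, at coordinates $i<\kappa$: intersect a monotonic lift $f^*$ of $f$ with a regularizing family for $D$ so that each $S_i=\Set{\alpha<\kappa | i\in f^*(\{\alpha\})}$ is finite, group the coordinates by the induced pattern $P_i=\Set{S\subseteq S_i | i\in f^*(S)}$, and apply the possibility clause to the Boolean elements $a_{S_*,P}=\bigwedge_{S\in P}f(S)\wedge\bigwedge_{S\subseteq S_*,\ S\notin P}\neg f(S)$ rather than to members of an antichain. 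This forces you to confront the coordinates whose induced pattern is Boolean-inconsistent, i.e.\ those with $a_{S_i,P_i}=\0$: for each \emph{fixed} finite $S_*$ these form a set in $j^{-1}[\{\0\}]$, but there are $\kappa$ many finite $S_*$, so discarding them uniformly requires a further application of $\kappa^+$-goodness of $D$ (a multiplicative function into $D$ avoiding all of the bad sets at once). This second, essential use of goodness is entirely absent from your sketch, and without it the ``pasting'' step and the final projection of the multiplicative witness through $j$ do not go through.
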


Note that $j^{-1}[U]$ is a $\kappa$-regular ultrafilter over $\kappa$, because it includes the $\kappa$-regular filter $j^{-1}[\{\1\}]$.

The following result, usually referred to as the `existence theorem', shows that, under some conditions on the Boolean algebra $\B$, a surjective homomorphism as in Theorem \ref{theorem:separation} does indeed exist.

\begin{theorem}[{Malliaris and Shelah \cite[Theorem 8.1]{ms:dl}}]\label{theorem:existence} Let $\kappa$ be a cardinal. If $\B$ is a $\kappa^+$-c.c.\ complete Boolean algebra of cardinality $\le 2^\kappa$, then there exists a surjective homomorphism $j\colon\pow{\kappa}\to\B$ such that $j^{-1}[\{\1\}]$ is a $\kappa$-regular $\kappa^+$-good filter over $\kappa$.
\end{theorem}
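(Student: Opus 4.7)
The plan is to adapt the classical Keisler--Kunen construction of $\kappa^+$-good $\kappa$-regular ultrafilters on $\kappa$, whose engine is an independent family in $\pow{\kappa}$ of size $2^\kappa$, so that the resulting quotient of $\pow{\kappa}$ matches exactly the given algebra $\B$ rather than some arbitrary quotient. Throughout, the homomorphism $j$ will be built as the union of a coherent chain of partial homomorphisms defined on subalgebras of $\pow{\kappa}$, with the target $\1 \in \B$ serving as the landing point for the filter we are constructing.

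Concretely, I would first fix, by Hausdorff's theorem, an independent family $\mathcal{E}\subseteq\pow{\kappa}$ of cardinality $2^\kappa$ together with a $\kappa$-regular filter $F_0$ on $\kappa$ chosen so that $\mathcal{E}$ remains independent modulo $F_0$. Using that $\abs{\B}\le 2^\kappa$, I would fix an enumeration $\Seq{\tau_\xi | \xi<2^\kappa}$ interleaving two kinds of task: each element $b\in\B$ appears as some $\tau_\xi$ (a \emph{surjectivity} task) and, by a standard diagonal enumeration, each monotonic function $f\colon\fin{\kappa}\to\pow{\kappa}$ that eventually lands in the filter appears as some $\tau_\xi$ (a \emph{goodness} task). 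Then I would build a continuous increasing chain $\Seq{(\mathcal{A}_\xi,h_\xi) | \xi\le 2^\kappa}$ where $\mathcal{A}_\xi\subseteq\pow{\kappa}$ is a subalgebra of size $\le\kappa+\abs{\xi}$, each $h_\xi\colon\mathcal{A}_\xi\to\B$ is a homomorphism extending all earlier $h_\eta$, and $F_0\subseteq h_\xi^{-1}[\{\1\}]$. At a surjectivity stage, the $\kappa^+$-c.c.\ of $\B$ lets me represent the target $b$ via a maximal antichain below $b$ of size $\le\kappa$; I pull each antichain element back through a fresh chunk of $\mathcal{E}$, thus producing a preimage $X_b$ and extending the homomorphism to the subalgebra generated by $\mathcal{A}_\xi\cup\{X_b\}$. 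At a goodness stage, I would apply the combinatorial core of Keisler--Kunen within $\mathcal{E}$ to construct a multiplicative refinement $g\colon\fin{\kappa}\to h_\xi^{-1}[\{\1\}]$ of the given $f$, consuming only countably many fresh independent elements.

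The main obstacle is controlling the interaction between the two kinds of extension: adding a new preimage must not spoil an earlier multiplicative refinement, and a fresh goodness refinement must not conflict with a value already declared for $h$. The resolution is a reservoir argument --- at each stage $\xi<2^\kappa$ one has consumed at most $\kappa\cdot\abs{\xi}<2^\kappa$ elements of $\mathcal{E}$, leaving $2^\kappa$ of them still independent modulo the current filter --- combined with the observation that, by $\kappa^+$-c.c.\ of $\B$, each surjectivity task requires only $\kappa$ many independent elements to realize. Setting $j=h_{2^\kappa}$ then yields a surjective homomorphism onto $\B$ whose fibre $j^{-1}[\{\1\}]$ extends $F_0$ (hence is $\kappa$-regular) and is $\kappa^+$-good by construction, as required.
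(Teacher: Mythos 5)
The paper does not actually prove Theorem~\ref{theorem:existence}; it is imported verbatim from Malliaris and Shelah \cite{ms:dl}, so your attempt must be measured against their construction. Your outline --- a length-$2^\kappa$ transfinite induction driven by an independent family used as a reservoir, interleaving surjectivity tasks with Keisler--Kunen goodness tasks --- has the right overall shape, but there are two genuine gaps. The first is that $j=h_{2^\kappa}$ is only defined on $\bigcup_{\xi}\mathcal{A}_\xi$, a subalgebra of $\pow{\kappa}$ which nothing in your bookkeeping forces to be all of $\pow{\kappa}$, whereas the theorem demands a homomorphism whose domain is the full power set. You need a third family of tasks, one for each $X\in\pow{\kappa}$, extending $h_\xi$ to the subalgebra generated by $\mathcal{A}_\xi\cup\{X\}$ via Sikorski's criterion (this is where completeness of $\B$ enters). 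This is not a formality: these ``deciding'' stages are where the real consistency work lives, because an arbitrary choice of value for $X$ enlarges the kernel ideal and can destroy the independence of the remaining reservoir modulo it. Your resolution of the ``main obstacle'' is purely a cardinality count of how much of $\mathcal{E}$ has been consumed, and counting cannot rule this out; Kunen's and Malliaris--Shelah's constructions control it with the good-triple discipline (filters kept \emph{maximal} subject to the residual family remaining independent), an invariant that must be carried through every stage and that your sketch omits.

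The second gap is in the surjectivity step. With a reservoir of independent \emph{sets}, ``pulling each antichain element back through a fresh chunk of $\mathcal{E}$'' is not a working mechanism: $\kappa$ independent subsets of $\kappa$ generate a free algebra whose cells do not form a $\kappa$-sized partition matching your maximal antichain below $b$, and the candidate preimage $X_b$ is an infinite union lying outside the finitely generated subalgebra you are extending. Malliaris and Shelah instead use an independent family of \emph{functions} $f\colon\kappa\to\kappa$ in the Engelking--Kar{\l}owicz sense, whose cells $f^{-1}(\{i\})$ genuinely partition $\kappa$ and can be mapped onto a maximal antichain of $\B$ refining $\{b,\lnot b\}$; that is precisely where the $\kappa^+$-c.c.\ (every maximal antichain of $\B$ has size at most $\kappa$, the size of the range of $f$) is consumed. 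As written, your only appeal to the chain condition occurs in a step that could in any case be done more simply by sending a single fresh independent set directly to $b$, so the hypothesis is invoked where it is not needed and absent where it is. A smaller point: you cannot enumerate in advance the monotonic functions ``that eventually land in the filter,'' since the filter is only known at the end; one must enumerate all monotonic $f\colon\fin{\kappa}\to\pow{\kappa}$, revisit each cofinally often, and use $\operatorname{cf}(2^\kappa)>\kappa$ to see that any $f$ with range in the final filter is caught at some stage $<2^\kappa$.
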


In order to formulate the main results of this section, it is convenient to introduce a natural concept of saturation for ultrafilters on complete Boolean algebras.

\begin{definition}\label{definition:saturates} Let $\lambda$ be a cardinal and $\B$ a complete Boolean algebra. Suppose $U$ is an ultrafilter on $\B$; we say that $U$ \emph{$\lambda$-saturates} a complete theory $T$ if for every $\lambda$-saturated model $\M\models T$, the Boolean ultrapower $\bu{\M}{\B}{U}$ is $\lambda$-saturated.
\end{definition}

\begin{remark} Suppose $U$ is a $\kappa$-regular ultrafilter over a set $I$. Then, by Theorem~\ref{theorem:kindep}, $U$ $\kappa^+$-saturates a complete countable theory $T$ if and only if for some (or, equivalently, for every) model $\M\models T$, the ultrapower $\M^I\!/U$ is $\kappa^+$-saturated. In his independent work, Ulrich \cite{ulrich:bv} fully generalized Theorem \ref{theorem:kindep} to regular ultrafilters on Boolean algebras; hence, the equivalence is still true when considering saturation of Boolean ultrapowers. However, to keep our proofs self-contained, we prefer to use Definition \ref{definition:saturates} and refer the reader to \cite[Theorem 5.9]{ulrich:bv} for a proof of the equivalence.
\end{remark}

Shelah \cite[Claim 3.4]{sh:1064} has first established a connection between morality of ultrafilters and saturation of Boolean ultrapowers. However, his result is framed in the context of atomic saturation in the infinitary logic $\mathbb{L}_{\theta\theta}$, and only the case where $\B$ is a power-set algebra is proved explicitly. In the next theorem we present a detailed explanation of the equivalence; our proof relies on Lemma \ref{lemma:bound}.

\begin{theorem}\label{theorem:moralsat} Let $\kappa$ be a cardinal, $\B$ a complete Boolean algebra, and $U$ a $\kappa$-regular ultrafilter on $\B$. Then, for a complete countable theory $T$ the following conditions are equivalent:
\begin{itemize}
\item $U$ is $\langle\kappa,\B,T\rangle$-moral;
\item $U$ $\kappa^+$-saturates $T$.
\end{itemize}
\end{theorem}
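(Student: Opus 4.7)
The plan is to establish each implication by converting between realisations of types in the Boolean ultrapower and multiplicative approximations of monotonic functions into $U$; in both directions the key bookkeeping is done on a maximal antichain encoding relevant Boolean quantities, with $\kappa$-regularity harnessed on one side and $\kappa^+$-saturation on the other.

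For the forward direction, assume $U$ is $\langle\kappa,\B,T\rangle$-moral and let $\M\models T$ be $\kappa^+$-saturated. Given a finitely satisfiable type $p(x)=\{\varphi_\alpha(x,\eq{\bm{\tau}_\alpha}{U}):\alpha<\kappa\}$ in $\bu{\M}{\B}{U}$, define $f(S)=\Qp{\exists x\bigwedge_{\alpha\in S}\varphi_\alpha(x,\bm{\tau}_\alpha)}$; Theorem \ref{theorem:los} makes $f\colon\fin{\kappa}\to U$ monotonic. I will check that $f$ is a $\langle\kappa,\B,T,\varphi\rangle$-possibility by, given $(S_*,a)$ as in Definition \ref{definition:possibility}, picking a common refinement $W$ of the $\dom(\bm{\tau}_\alpha)$ for $\alpha\in S_*$ and any $w\in W$ with $w\wedge a>\0$: since every $f(S)$ is based on $W$, $a$ and $w$ decide $f(S)$ identically, so the witnesses $\bm{b}_\alpha=(\bm{\tau}_\alpha\mathbin{\downarrow} W)(w)$ in $\M$ satisfy (\ref{eq:possibility}). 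Morality then produces a multiplicative $g\colon\fin{\kappa}\to U$ with $g\le f$. Combining $g$ with the $\kappa$-regularity family $\{x_\alpha\}$ from Definition \ref{definition:regular}, the function $h(S)=g(S)\wedge\bigwedge_{\alpha\in S}x_\alpha$ is multiplicative and $U$-valued, and the regularity antichain, via the easy direction of Lemma \ref{lemma:bound}, forces $\bigwedge_n\bigvee\{h(S):S\in{[\kappa]}^n\}=\0$. Applying the nontrivial direction of Lemma \ref{lemma:bound} supplies a maximal antichain $A'$ on which each $h(\{\alpha\})$ is based and for which each $T(a')=\{\alpha:a'\le h(\{\alpha\})\}$ is finite. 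Since $a'\le h(T(a'))\le f(T(a'))$, each $a'\in A'$ lies below witnesses selectable piecewise along a common refinement of $\{\dom(\bm{\tau}_\alpha):\alpha\in T(a')\}$; assembling these choices into a name $\sigma$ and computing directly gives $h(\{\alpha\})\le\Qp{\varphi_\alpha(\sigma,\bm{\tau}_\alpha)}\in U$, so $\eq{\sigma}{U}$ realises $p$ by Theorem \ref{theorem:los}.

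For the converse, assume $U$ $\kappa^+$-saturates $T$ and fix $\varphi=\Seq{\varphi_\alpha(x,\bm{y}_\alpha)|\alpha<\kappa}$ together with a $\langle\kappa,\B,T,\varphi\rangle$-possibility $f\colon\fin{\kappa}\to U$. Take a $\kappa^+$-saturated $\M\models T$, so $\bu{\M}{\B}{U}$ is $\kappa^+$-saturated by hypothesis, and fix a maximal antichain $A$ refining the family $\{\{f(S),\neg f(S)\}:S\in\fin{\kappa}\}$; write $S(a)=\{\alpha<\kappa:a\le f(\{\alpha\})\}$. For each $a\in A$ the possibility condition, combined with completeness of $T$, shows that the first-order type in the variables $\{\bm{z}_\alpha:\alpha\in S(a)\}$ asserting, for every finite $S\subseteq S(a)$, either $\exists x\bigwedge_{\alpha\in S}\varphi_\alpha(x,\bm{z}_\alpha)$ or its negation according as $a\le f(S)$ or $a\wedge f(S)=\0$, is finitely satisfiable in $\M$; $\kappa^+$-saturation realises it, iterating one variable at a time. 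Define $\bm{\tau}_\alpha$ on $A$ by taking $\bm{\tau}_\alpha(a)$ to be the realised value when $\alpha\in S(a)$ and an arbitrary tuple otherwise. Computing on the common refinement $A$ and using that the realised values control the existentials exactly then yields
\[
\Qp{\exists x\bigwedge_{\alpha\in S}\varphi_\alpha(x,\bm{\tau}_\alpha)}\wedge\bigwedge_{\alpha\in S}f(\{\alpha\})=f(S)
\]
for every $S\in\fin{\kappa}$; in particular the type $\{\varphi_\alpha(x,\eq{\bm{\tau}_\alpha}{U}):\alpha<\kappa\}$ is finitely satisfiable in $\bu{\M}{\B}{U}$ and realised by some $\eq{\sigma}{U}$. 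Setting
\[
g(S)=\bigwedge_{\alpha\in S}\bigl(\Qp{\varphi_\alpha(\sigma,\bm{\tau}_\alpha)}\wedge f(\{\alpha\})\bigr)
\]
produces a multiplicative function into $U$, and the displayed identity forces $g(S)\le f(S)$.

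The main obstacle is the converse: because $\bm{\tau}_\alpha$ has genuine content only below the portion of $A$ where $\alpha\in S(a)$, the arbitrary choices elsewhere could in principle push $\Qp{\exists x\bigwedge_{\alpha\in S}\varphi_\alpha(x,\bm{\tau}_\alpha)}$ strictly above $f(S)$; the correction factor $\bigwedge_{\alpha\in S}f(\{\alpha\})$ in $g(S)$ is exactly what excises this excess and keeps $g\le f$ while preserving multiplicativity. The forward direction, by contrast, is a Mansfield-style construction whose main subtlety is the interaction between multiplicativity and $\kappa$-regularity encoded by Lemma \ref{lemma:bound}.
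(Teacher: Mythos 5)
Your forward direction is essentially the paper's argument and is sound. Taking $f(S)=\Qp{\exists x\bigwedge_{\alpha\in S}\varphi_\alpha(x,\bm{\tau}_\alpha)}$ without the regularity factors does verify the possibility condition (the decisions of $a$ and of a compatible $w$ in the common refinement agree, exactly as you say), and attaching $\bigwedge_{\alpha\in S}x_\alpha$ only after morality has produced $g$ is a harmless rearrangement of the paper's bookkeeping. One imprecision: the vanishing of $\bigwedge_{n}\bigvee\Set{h(S) | S\in{[\kappa]}^n}$ is not literally an instance of the easy direction of Lemma \ref{lemma:bound} applied to the regularity antichain $A$, because $h(\{\alpha\})=g(\{\alpha\})\wedge x_\alpha$ need not be based on $A$; you need the short direct argument (if $a\wedge h(S)>\0$ with $\abs{S}=n$ for every $n$, then $a\wedge\bigwedge_{\alpha\in S}x_\alpha>\0$, hence $a\le x_\alpha$ for all $\alpha\in S$ since each $x_\alpha$ is based on $A$, contradicting finiteness of $\Set{\alpha<\kappa | a\le x_\alpha}$). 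This is the argument the paper gives inline, and it is a three-line fix.

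The converse direction has a genuine gap at its first step: you fix a maximal antichain on which \emph{every} $f(S)$, $S\in\fin{\kappa}$, is based. Such an antichain need not exist, because the set of $d\in\B\setminus\{\0\}$ deciding infinitely many prescribed elements of a complete Boolean algebra need not be dense: in the Cohen or random algebra, if the $f(\{n\})$ form a countable independent family, no nonzero element lies below or is disjoint from all of them, so no maximal antichain refines all the partitions $\{f(\{n\}),\neg f(\{n\})\}$. Atomless algebras are exactly the case of interest, so this cannot be dismissed. This is precisely where $\kappa$-regularity must enter the converse (you invoke it only in the forward direction): the paper's Claim \ref{claim:wa} refines the regularity antichain to a $W$ on which each $w$ decides $f(S)$ only for $S\subseteq S(w)=\Set{\alpha<\kappa | w\le x_\alpha}$, a \emph{finite} set, and that local decision is a genuine density statement. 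The price is that your displayed identity must carry the factor $\bigwedge_{\alpha\in S}x_\alpha$ on both sides, as in \eqref{eq:ptype}, and the multiplicative $g$ must include these factors too; the obstacle you flag at the end (the factor $\bigwedge_{\alpha\in S}f(\{\alpha\})$ excising excess from the arbitrary choices of $\bm{\tau}_\alpha(a)$) is real but is the milder of the two issues, and fixing it does not repair the missing antichain. A side benefit of the correct setup is that each $S_*(w)$ is finite, so your infinite-variable type collapses to a single sentence $\exists\bm{y}\bigwedge\Gamma_a$ and only completeness of $T$, not saturation of $\M$, is needed at that stage.
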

\begin{proof} Let the family $\Set{x_\alpha | \alpha<\kappa}\subseteq U$ and the maximal antichain $A\subset\B$ witness the $\kappa$-regularity of $U$. For each $b\in\B$, we define
\[
S(b)=\Set{\alpha<\kappa | b\le x_\alpha};
\]
by $\kappa$-regularity, the set $S(b)$ is finite whenever $b>\0$.

Suppose $U$ is $\langle\kappa,\B,T\rangle$-moral. Let $\M$ be a model of $T$ and
\[
p(x)=\Set{\varphi_\alpha\bigl(x,\eq{\bm{\tau}_\alpha}{U}\bigr) | \alpha<\kappa}
\]
be a type in $\bu{\M}{\B}{U}$, where each $\bm{\tau}_\alpha$ is a finite tuple from $\bp{M}{\B}$. We shall show that $p(x)$ is realized in $\bu{\M}{\B}{U}$.

By Theorem \ref{theorem:los}, for every $S\in\fin{\kappa}$ we have
\[
\Qp{\exists x\bigwedge_{\alpha\in S}\varphi_\alpha(x,\bm{\tau}_\alpha)}\in U.
\]
This allows us to define a monotonic function $f\colon\fin{\kappa}\to U$ by letting for every $S\in\fin{\kappa}$
\[
f(S)=\Qp{\exists x\bigwedge_{\alpha\in S}\varphi_\alpha(x,\bm{\tau}_\alpha)}\wedge\bigwedge_{\alpha\in S}x_\alpha.
\]
Define $\varphi=\Seq{\varphi_\alpha(x,\bm{y}_\alpha) | \alpha<\kappa}$, where each $\bm{y}_\alpha$ is a new tuple of variables of the same length as $\bm{\tau}_\alpha$; we aim to show that $f$ is a $\langle\kappa,\B,T,\varphi\rangle$-possibility. Let $S_*\in\fin{\kappa}$ and $a\in\B\setminus\{\0\}$ be fixed, and assume that
\begin{itemize}
\item for every $S\subseteq S_*$ either $a\le f(S)$ or $a\wedge f(S)=\0$,
\item $S_*\subseteq\Set{\alpha<\kappa | a\le f(\{\alpha\})}$.
\end{itemize}
Choose a maximal antichain $D$ with the following property: for each $\alpha\in S_*$, $D$ is a refinement of the domain of each name in the tuple $\bm{\tau}_\alpha$. Then there exists $d\in D$ such that $d\wedge a>\0$. For every $\alpha\in S_*$, according to Definition \ref{definition:hs} let
\[
\bm{b}_\alpha=(\bm{\tau}_\alpha\mathbin{\downarrow} D)(d);
\]
we show that this choice satisfies \eqref{eq:possibility} from Definition \ref{definition:possibility}. Let $S\subseteq S_*$; if $a\le f(S)$, then $\0<d\wedge f(S)\le d\wedge \Qp{\exists x\bigwedge_{\alpha\in S}\varphi_\alpha(x,\bm{\tau}_\alpha)}$, therefore
\begin{equation}\label{eq:moralposs}
\M\models\exists x\bigwedge_{\alpha\in S}\varphi_\alpha\bigl(x,(\bm{\tau}_\alpha\mathbin{\downarrow} D)(d)\bigr).
\end{equation}
Conversely, if \eqref{eq:moralposs} holds then $d\le\Qp{\exists x\bigwedge_{\alpha\in S}\varphi_\alpha(x,\bm{\tau}_\alpha)}$ and therefore
\[
a\wedge\Qp{\exists x\bigwedge_{\alpha\in S}\varphi_\alpha(x,\bm{\tau}_\alpha)}>\0.
\]
From the second assumption above we have $a\le\bigwedge_{\alpha\in S_*}x_\alpha\le\bigwedge_{\alpha\in S}x_\alpha$, so we deduce $a\wedge f(S)>\0$ and finally, from the first assumption above, $a\le f(S)$. This concludes the proof that $f$ is a $\langle\kappa,\B,T,\varphi\rangle$-possibility.

Since we are assuming that $U$ is $\langle\kappa,\B,T\rangle$-moral, there exists a multiplicative function $g\colon\fin{\kappa}\to U$ with the property that $g(S)\le f(S)$ for all $S\in\fin{\kappa}$. We wish to show that $g$ satisfies condition \ref{lemma:bounduno} of Lemma \ref{lemma:bound}. Suppose not; then there exists some $a\in A$ such that
\[
a\wedge\bigwedge\Set{\bigvee \Set{g(S) | S\in{[\kappa]}^n}| n<\omega}>\0,
\]
hence for every $n<\omega$ there exists some $S\in{[\kappa]}^n$ such that
\[
\0<a\wedge g(S)\le a\wedge f(S)\le a\wedge\bigwedge_{\alpha\in S} x_\alpha,
\]
but this contradicts our regularity assumption that $S(a)$ is finite. This proves that $g$ satisfies condition \ref{lemma:bounduno} of Lemma \ref{lemma:bound}.

Consequently, there exists a maximal antichain $W\subset\B$ such that:
\begin{itemize}
\item for every $\alpha<\kappa$, $g(\{\alpha\})$ is based on $W$;
\item for every $w\in W$, the set defined as $R(w)=\Set{\alpha<\kappa | w\le g(\{\alpha\})}$ is finite.
\end{itemize}
For each $w\in W$, use fullness of the Boolean-valued model $\bp{\M}{\B}$ (see Mansfield \cite[Theorem 1.4]{MANSFIELD}) to choose a name $\tau_w\in \bp{M}{\B}$ such that
\[
\Qp{\exists x\bigwedge_{\alpha\in R(w)}\varphi_\alpha(x,\bm{\tau}_\alpha)}=\Qp{\bigwedge_{\alpha\in R(w)}\varphi_\alpha(\tau_w,\bm{\tau}_\alpha)}.
\]
Finally, by \cite[Theorem 1.3]{MANSFIELD}, let $\tau\in \bp{M}{\B}$ be such that for all $w\in W$, $w\le\Qp{\tau=\tau_w}$. We shall show that $\eq{\tau}{U}$ realizes the type $p(x)$ in $\bu{\M}{\B}{U}$.

For every $w\in W$, by multiplicativity of $g$ we have
\begin{multline*}
w\le\bigwedge_{\alpha\in R(w)}g(\{\alpha\})=g(R(w))\le f(R(w))\\ \le\Qp{\exists x\bigwedge_{\alpha\in R(w)}\varphi_\alpha(x,\bm{\tau}_\alpha)}=\Qp{\bigwedge_{\alpha\in R(w)}\varphi_\alpha(\tau_w,\bm{\tau}_\alpha)},
\end{multline*}
whence
\[
w\le\Qp{\bigwedge_{\alpha\in R(w)}\varphi_\alpha(\tau_w,\bm{\tau}_\alpha)}\wedge\Qp{\tau=\tau_w}\le\Qp{\bigwedge_{\alpha\in R(w)}\varphi_\alpha(\tau,\bm{\tau}_\alpha)}.
\]
If follows that for every $\alpha<\kappa$
\[
\Qp{\varphi_\alpha(\tau,\bm{\tau}_\alpha)}\ge\bigvee\Set{w\in W | \alpha\in R(w)}=\bigvee\Set{w\in W | w\le g(\{\alpha\})}=g(\{\alpha\})\in U,
\]
thus showing that $\Qp{\varphi_\alpha(\tau,\bm{\tau}_\alpha)}\in U$. This completes the proof that $\bu{\M}{\B}{U}$ is $\kappa^+$-saturated.

For the reverse implication, suppose $U$ $\kappa^+$-saturates the complete theory $T$. Let $\varphi=\Seq{\varphi_\alpha(x,\bm{y}_\alpha) | \alpha<\kappa}$ be a sequence of formulae; for a $\langle\kappa,\B,T,\varphi\rangle$-possibility $f\colon\fin{\kappa}\to U$ we shall find a multiplicative function $g\colon\fin{\kappa}\to U$ with the property that $g(S)\le f(S)$ for all $S\in\fin{\kappa}$.

\begin{claim}\label{claim:wa} There exists a refinement $W$ of $A$ with the property that for every $w\in W$ and every $S\subseteq S(w)$ either $w\le f(S)$ or $w\wedge f(S)=\0$.
\end{claim}
\begin{proof}[Proof of Claim \ref{claim:wa}] Let $D$ be the set of all $d\in\B\setminus\{\0\}$ which are below some element of $A$, and such that for every $S\subseteq S(d)$ either $d\le f(S)$ or $d\wedge f(S)=\0$. We shall show that $D$ is dense in $\B$, so that every maximal antichain $W\subseteq D$ will have the desired property. The same argument as the proof of Theorem \ref{theorem:mansfieldbenda} will work: indeed, suppose $b\in\B\setminus\{\0\}$. We can find some $a\in A$ such that $a\wedge b>\0$. Now let $P$ be a common refinement of the finitely many maximal antichains $\{f(S),\neg f(S)\}$ for $S\subseteq S(a)$. Let $p\in P$ be such that $p\wedge a\wedge b>\0$; then it is clear that $d=p\wedge a\wedge b$ is such that $d\le b$ and $d\in D$.
\end{proof}

Now, for each $a\in W$ let
\[
S_*(a)=\Set{\alpha\in S(a) | a\le f(\{\alpha\})},
\]
and note that:
\begin{itemize}
\item for every $S\subseteq S_*(a)$ either $a\le f(S)$ or $a\wedge f(S)=\0$;
\item $S_*(a)\subseteq\Set{\alpha<\kappa | a\le f(\{\alpha\})}$.
\end{itemize}
Since $f$ is a $\langle\kappa,\B,T,\varphi\rangle$-possibility, for each $a\in W$ there exist a model $\M_a\models T$ and parameters $\Set{\bm{b}_\alpha(a)|\alpha\in S_*(a)}$ in $M_a$ such that for all $S\subseteq S_*(a)$
\begin{equation}\label{eq:apossibility}
a\le f(S)\iff\M_a\models\exists x\bigwedge_{\alpha\in S}\varphi_\alpha(x,\bm{b}_\alpha(a)).
\end{equation}
Now let $\M$ be a $\kappa^+$-saturated model of $T$.

\begin{claim}\label{claim:wb} For every $a\in W$ there exists a sequence $\Seq{\bm{\tau}_\alpha(a) | \alpha<\kappa}$ in $M$ such that for every $S\subseteq S_*(a)$
\[
a\le f(S)\iff\M\models\exists x\bigwedge_{\alpha\in S}\varphi_\alpha(x,\bm{\tau}_\alpha(a)).
\]
\end{claim}
\begin{proof}[Proof of Claim \ref{claim:wb}] Let us fix $a\in W$. We define
\begin{align*}
\Gamma_a={}&\Set{\exists x\bigwedge_{\alpha\in S}\varphi_\alpha(x,\bm{y}_\alpha) | S\subseteq S_*(a)\text{ and }a\le f(S)}\\ {}\cup{} &\Set{\lnot\exists x\bigwedge_{\alpha\in S}\varphi_\alpha(x,\bm{y}_\alpha) | S\subseteq S_*(a)\text{ and }a\wedge f(S)=\0}.
\end{align*}
Let $\bm{y}$ be the finite tuple of variables made of all the $\bm{y}_\alpha$ appearing in $\Gamma_a$. Then \eqref{eq:apossibility} implies that $\M_a\models\exists\bm{y}\bigwedge\Gamma_a$, but $\M_a\equiv\M$ by completeness of $T$, therefore $\M\models\exists\bm{y}\bigwedge\Gamma_a$. This allows us to define $\bm{\tau}_\alpha(a)$ in $M$ for every $\alpha\in S_*(a)$. Otherwise, if $\alpha\notin S_*(a)$, we can define $\bm{\tau}_\alpha(a)$ arbitrarily.
\end{proof}

We have thus defined a sequence of tuples of names $\Seq{\bm{\tau}_\alpha | \alpha<\kappa}$ in $\bp{M}{\B}$. We aim to prove that
\[
p(x)=\Set{\varphi_\alpha\bigl(x,\eq{\bm{\tau}_\alpha}{U}\bigr) | \alpha<\kappa}
\]
is a type in $\bu{\M}{\B}{U}$. To do so, we shall show that for each $S\in\fin{\kappa}$
\begin{equation} \label{eq:ptype}
\Qp{\exists x\bigwedge_{\alpha\in S}\varphi_\alpha(x,\bm{\tau}_\alpha)}\wedge\bigwedge_{\alpha\in S}\bigl(f(\{\alpha\})\wedge x_\alpha\bigr)= f(S)\wedge\bigwedge_{\alpha\in S}x_\alpha\in U
\end{equation}
and then conclude using Theorem \ref{theorem:los}. First of all, note that both sides of \eqref{eq:ptype} are based on $W$, due to our choice of $W$ in Claim \ref{claim:wa}. Hence,
\begin{multline*}
\Qp{\exists x\bigwedge_{\alpha\in S}\varphi_\alpha(x,\bm{\tau}_\alpha)}\wedge\bigwedge_{\alpha\in S}\bigl(f(\{\alpha\})\wedge x_\alpha\bigr) \\
=\bigvee\Set{a\in W | \M\models\exists x\bigwedge_{\alpha\in S}\varphi_\alpha(x,\bm{\tau}_\alpha(a))} \wedge\bigwedge_{\alpha\in S}\bigl(f(\{\alpha\})\wedge x_\alpha\bigr) \\
=\bigvee\Set{ a\wedge\bigwedge_{\alpha\in S}\bigl(f(\{\alpha\})\wedge x_\alpha\bigr)| a\in W,\ \M\models\exists x\bigwedge_{\alpha\in S}\varphi_\alpha(x,\bm{\tau}_\alpha(a))} \\
=\bigvee\Set{a\in W | a\le\bigwedge_{\alpha\in S}\bigl(f(\{\alpha\})\wedge x_\alpha\bigr),\ \M\models\exists x\bigwedge_{\alpha\in S}\varphi_\alpha(x,\bm{\tau}_\alpha(a))} \\
=\bigvee\Set{a\in W | S\subseteq S_*(a), \ \M\models\exists x\bigwedge_{\alpha\in S}\varphi_\alpha(x,\bm{\tau}_\alpha(a))} \\
=\bigvee\Set{a\in W | a\le f(S)\wedge\bigwedge_{\alpha\in S}x_\alpha}=f(S)\wedge\bigwedge_{\alpha\in S}x_\alpha,
\end{multline*}
thus showing, in particular, that $p(x)$ is finitely satisfiable, hence a type in $\bu{\M}{\B}{U}$.

Since we are assuming that $U$ $\kappa^+$-saturates $T$, let $\tau\in\bp{M}{\B}$ be a name such that $\eq{\tau}{U}$ realizes $p(x)$ in $\bu{\M}{\B}{U}$. We define a function $g\colon\fin{\kappa}\to U$ as follows: for $S\in\fin{\kappa}$,
\[
g(S)=\bigwedge_{\alpha\in S}\bigl(\Qp{\varphi_\alpha(\tau,\bm{\tau}_\alpha)}\wedge f(\{\alpha\})\wedge x_\alpha\bigr).
\]
Then clearly $g$ is multiplicative and, for every $S\in\fin{\kappa}$, we may apply \eqref{eq:ptype} to obtain
\begin{multline*}
g(S)=\Qp{\bigwedge_{\alpha\in S}\varphi_\alpha(\tau,\bm{\tau}_\alpha)}\wedge\bigwedge_{\alpha\in S}\bigl(f(\{\alpha\})\wedge x_\alpha\bigr)\\ \le\Qp{\exists x\bigwedge_{\alpha\in S}\varphi_\alpha(x,\bm{\tau}_\alpha)}\wedge\bigwedge_{\alpha\in S}\bigl(f(\{\alpha\})\wedge x_\alpha\bigr)\le f(S).
\end{multline*}
This completes the proof that $U$ is $\langle\kappa,\B,T\rangle$-moral.
\end{proof}

We now move on to present our main result in this section, which follows from Theorem \ref{theorem:moralsat} and Malliaris and Shelah's technique of separation of variables.

\begin{theorem}\label{theorem:kobu} Let $\kappa$ be a cardinal and $T_0$, $T_1$ complete countable theories. Then the following are equivalent:
\begin{itemize}
\item $T_0\trianglelefteq_\kappa T_1$;
\item for every $\kappa^+$-c.c.\ complete Boolean algebra $\B$ of cardinality $\le 2^\kappa$, and every $\kappa$-regular ultrafilter $U$ on $\B$, if $U$ $\kappa^+$-saturates $T_1$ then $U$ $\kappa^+$-saturates $T_0$.
\end{itemize}
\end{theorem}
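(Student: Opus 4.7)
The plan is to use the machinery already assembled in this section, namely Theorems~\ref{theorem:moralsat}, \ref{theorem:separation}, and \ref{theorem:existence}, as a dictionary between Boolean ultrapower saturation and ordinary ultrapower saturation, reducing the statement on both sides to morality of ultrafilters. Once the dictionary is fixed, the two directions become a matter of transporting the hypothesis across the homomorphism supplied by the existence theorem, or of instantiating the right Boolean algebra.

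For the forward direction, I would fix a $\kappa^+$-c.c.\ complete Boolean algebra $\B$ of cardinality $\le 2^\kappa$ and a $\kappa$-regular ultrafilter $U$ on $\B$ which $\kappa^+$-saturates $T_1$. By Theorem~\ref{theorem:existence}, pick a surjective Boolean homomorphism $j\colon\pow{\kappa}\to\B$ such that $j^{-1}[\{\1\}]$ is a $\kappa$-regular $\kappa^+$-good filter over $\kappa$; then $V=j^{-1}[U]$ is a $\kappa$-regular ultrafilter over $\kappa$ since it extends that filter. Now Theorem~\ref{theorem:moralsat} converts the hypothesis into the statement that $U$ is $\langle\kappa,\B,T_1\rangle$-moral, and Theorem~\ref{theorem:separation} in turn converts this into the statement that $\M_1^\kappa/V$ is $\kappa^+$-saturated, for $\M_1\models T_1$. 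Applying $T_0\trianglelefteq_\kappa T_1$ to the $\kappa$-regular ultrafilter $V$ delivers $\kappa^+$-saturation of $\M_0^\kappa/V$, and reversing the same chain of equivalences through Theorems~\ref{theorem:separation} and \ref{theorem:moralsat} yields that $U$ $\kappa^+$-saturates $T_0$.

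For the reverse direction, I would instantiate the hypothesis at $\B=\pow{\kappa}$, which is a complete Boolean algebra of cardinality $2^\kappa$ and is $\kappa^+$-c.c.\ because any pairwise disjoint family of nonempty subsets of $\kappa$ has size at most $\kappa$. The maximal antichains in $\pow{\kappa}$ are precisely the partitions of $\kappa$, so names in $\bp{M}{\pow{\kappa}}$ correspond (via their level-set partitions) to arbitrary functions $\kappa\to M$, giving a natural isomorphism $\bu{\M}{\pow{\kappa}}{V}\cong\M^\kappa/V$. Moreover, a $\kappa$-regular ultrafilter $V$ over $\kappa$ in the sense of Definition~\ref{definition:regularpow} is $\kappa$-regular on $\pow{\kappa}$ in the sense of Definition~\ref{definition:regular}, as witnessed by the same regularizing family together with the maximal antichain of singletons. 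So if $\M_1^\kappa/V$ is $\kappa^+$-saturated, Theorem~\ref{theorem:kindep} lets us replace $\M_1$ by a $\kappa^+$-saturated model of $T_1$, from which $V$ $\kappa^+$-saturates $T_1$ in the sense of Definition~\ref{definition:saturates}; the hypothesis then gives that $V$ $\kappa^+$-saturates $T_0$, and one more application of Theorem~\ref{theorem:kindep} recovers $\kappa^+$-saturation of $\M_0^\kappa/V$ for arbitrary $\M_0\models T_0$, establishing $T_0\trianglelefteq_\kappa T_1$.

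The heavy lifting has already been carried out in Theorems~\ref{theorem:moralsat}, \ref{theorem:separation}, and \ref{theorem:existence}; no new combinatorial argument is needed. The two points that call for care, and which I expect to be the main obstacles to writing the proof cleanly rather than to conceiving it, are the verification that $V$ transfers $\kappa$-regularity between Definitions~\ref{definition:regularpow} and~\ref{definition:regular} under the identification $\B=\pow{\kappa}$, and the identification of the Boolean ultrapower by $\pow{\kappa}$ with the standard ultrapower; both are routine but form the bridge between the two formulations.
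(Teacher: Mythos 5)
Your proposal is correct and follows essentially the same route as the paper: the forward direction is the identical chain through Theorems~\ref{theorem:moralsat}, \ref{theorem:existence}, and \ref{theorem:separation}, and the reverse direction is the paper's observation that $\pow{\kappa}$ qualifies as an instance of the hypothesis. The only difference is that you spell out the details the paper leaves implicit in the reverse direction (the identification $\bu{\M}{\pow{\kappa}}{V}\cong\M^\kappa\!/V$, the transfer of regularity via the antichain of singletons, and the use of Theorem~\ref{theorem:kindep} to reconcile Definition~\ref{definition:saturates} with the quantification over arbitrary models in Definition~\ref{definition:ko}), all of which you handle correctly.
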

\begin{proof} Suppose that $T_0\trianglelefteq_\kappa T_1$. Let $\B$ be a $\kappa^+$-c.c.\ complete Boolean algebra with $\abs{\B}\le 2^\kappa$, and let $U$ be a $\kappa$-regular ultrafilter on $\B$ which $\kappa^+$-saturates $T_1$. By Theorem \ref{theorem:moralsat}, we know that $U$ is $\langle\kappa,\B,T_1\rangle$-moral.

By Theorem \ref{theorem:existence}, there exists a surjective homomorphism $j\colon\pow{\kappa}\to\B$ such that $j^{-1}[\{\1\}]$ is a $\kappa$-regular $\kappa^+$-good filter over $\kappa$. Therefore, $j^{-1}[U]$ is a $\kappa$-regular ultrafilter over $\kappa$, which $\kappa^+$-saturates $T_1$ by Theorem \ref{theorem:separation}. But $T_0\trianglelefteq_\kappa T_1$, therefore $j^{-1}[U]$ also $\kappa^+$-saturates $T_0$. By Theorem \ref{theorem:separation} again, we deduce that $U$ is $\langle\kappa,\B,T_0\rangle$-moral, and finally we conclude that $U$ is $\kappa^+$-saturates $T_0$ by Theorem \ref{theorem:moralsat}.

For the reverse implication, it is sufficient to observe that $\pow{\kappa}$ is a $\kappa^+$-c.c.\ complete Boolean algebra of cardinality $\le 2^\kappa$.
\end{proof}

Working independently, Ulrich \cite{ulrich:bv} has obtained another formulation of Keisler's order using Boolean-valued models. Compared to our Theorem \ref{theorem:kobu}, his characterization holds for all ultrafilters on $\kappa^+$-c.c.\ complete Boolean algebras $\B$, without the assumption $\abs{\B}\le 2^\kappa$. The following question, however, remains open.

\begin{question} Does the equivalence of Theorem \ref{theorem:kobu} still hold without the $\kappa^+$-c.c.\ assumption on $\B$?
\end{question}

\end{document}